\def\rmA{{\rm A}}
\def\rmB{{\rm B}}
\def\rmC{{\rm C}}
\def\I{{\rm I}}
\def\J{{\rm J}}
\def\U{{\rm U}}
\def\V{\mathcal{V}}
\def\DD{\,{\scriptstyle \Delta}\,}
\newcommand{\JS}[2]{\ensuremath{\mathrm{JS}_{#1}^{#2}}}
\newtheorem{theorem}{Theorem}[section]   
\newtheorem{corollary}[theorem]{Corollary}     
\newtheorem{lemma}[theorem]{Lemma}         
\newtheorem{proposition}[theorem]{Proposition}  
\newenvironment{notation}{{\bf Notation.\ }\rm}{\bigskip}
\newenvironment{remark}{{\bf Remark.\ }\rm}{\bigskip}  
\newenvironment{example}{\bf Example. \rm}{\bigskip}
\newenvironment{definition}{\bf Definition.\ \rm}{\bigskip}
\title[Permanents and Generalized Derangements]{Zeons, Permanents, the Johnson scheme, and Generalized Derangements}
\author{Philip Feinsilver and John McSorley}
\address{Department of Mathematics\hfill\break
Southern Illinois University \hfill\break
Carbondale, IL. 62901, U.S.A.}
\email{pfeinsil@math.siu.edu \\ jmcsorley@math.siu.edu}
\DeclareMathOperator{\per}{per\,}
\DeclareMathOperator{\tr}{tr\,}
\begin{document}

\begin{abstract}
Starting with the zero-square ``zeon algebra" the connection with permanents is shown. Permanents of sub-matrices of a linear combination
of the identity matrix and all-ones matrix leads to moment polynomials with respect to  the exponential distribution.
A permanent trace formula analogous to MacMahon's Master Theorem is presented and applied.
Connections with permutation groups acting on sets and the Johnson association scheme arise. The families of numbers
appearing as matrix entries turn out to be related to interesting variations on derangements. These generalized
derangements are considered in detail as an illustration of the theory.
\end{abstract}

\maketitle

\thispagestyle{empty}

\section{Introduction}
Functions acting on a finite set can be conveniently expressed using matrices, whereby the composition of functions
corresponds to multiplication of the matrices. Essentially one is considering the induced action on the vector space
with the elements of the set acting as a basis. This action extends to tensor powers of the vector space. One can take
symmetric powers, antisymmetric powers, etc., that yield representations of the multiplicative semigroup of functions.
 An especially interesting representation occurs by taking non-reflexive, symmetric powers. Identifying the underlying set
of cardinality $n$ with $\{1,2,\ldots,n\}$, the vector space has basis $e_1,e_2,\ldots$. The action we are interested in
may be found by saying that the elements $e_i$ generate a ``zeon algebra", the relations being that the $e_i$ commute,
with $e_i^2=0$, $1\le i\le n$. To get a feeling for this, first we recall the action on Grassmann algebra where the matrix elements of the induced action
arise as determinants. For the zeon case, permanents appear. \bigskip

An interesting connection with the centralizer algebra of the action of the symmetric group comes up.
For the defining action on the set $\{1,\ldots,n\}$, represented as $0$\,-$1$ permutation matrices, the centralizer algebra
of $n\times n$ matrices commuting with the entire group is generated by $I$, the identity matrix, and $J$, the all-ones matrix.
The question was if they would help determine the centralizer algebra for the action on subsets of a fixed size, $\ell$-sets, for $\ell>1$.
It is known that the basis for the centralizer algebra is given by the adjacency matrices of the Johnson scheme. Could one
find this working solely with $I$ and $J$? The result is that by computing the ``zeon powers", i.e., the action of $sI+tJ$, linear
combinations of $I$ and $J$,  on $\ell$-sets, the Johnson scheme appears naturally. The coefficients are polynomials in $s$ and $t$
occurring as moments of the exponential distribution. And they turn out to count derangements, and related generalized derangements.
The occurrence of Laguerre polynomials in the combinatorics of derangements is well-known. Here the ${}_2F_1$ hypergeometric
function, which is closely related to Poisson-Charlier polynomials, arises rather naturally.\bigskip

Here is an outline of the paper. \S2 introduces zeons and permanents. The trace formula is proved. Connections with the centralizer
algebra of the action  of the symmetric group on sets is detailed. \S3 is a study of exponential polynomials needed for the
remainder of the paper. Zeon powers of $sI+tJ$ are found in \S4 where the spectra of the matrices are found via the Johnson scheme.
\S5 presents a combinatorial approach to the zeon powers of $sI+tJ$, including an interpretation of exponential moment polynomials
by elementary subgraphs. In \S6, generalized derangement numbers, specifically counting derangements and counting arrangements are considered in detail. 
The Appendix has some derangement numbers and arrangement numbers
for reference, as well as a page of exponential polynomials. An example expressing exponential polynomials in terms of elementary subgraphs is given there.

\section{Representations of functions acting on sets}

Let $\V$ denote the vector space $\mathbb{Q}^n$ or $\mathbb{R}^n$. We will look at the
action of a linear map on $\V$ extended to quotients of tensor powers $\V^{\otimes\ell}$.
We work with coordinates rather than vectors. First, recall the Grassmann case.
To find the action on $\V^{\wedge\ell}$ consider an algebra generated by $n$ variables $e_i$ 
satisfying $e_ie_j=-e_je_i$. In particular, $e_i^2=0$. \bigskip

\begin{notation}   The standard $n$-set $\{1,\ldots,n\}$ will be denoted $[n]$. Roman caps
$\I$, $\J$, $\rmA$, etc. denote subsets of $[n]$. We will identify them with the corresponding
ordered tuples.  Generally, given an $n$-tuple $(x_1,\ldots,x_n)$ and a subset $\I\subset [n]$, we denote
products
$$x_\I=\prod_{j\in\I} x_j$$
where the indices are in increasing order if the variables are not assumed to commute. \par
As an index we will use $\U$ to denote the full set $[n]$.\par
Italic $I$ and $J$ will denote the identity matrix and all-ones matrix respectively. \bigskip

For a matrix $X_{\I\J}$, say, where the labels are subsets of fixed size $\l$, dictionary ordering is used.
That is, convert to ordered tuples and use dictionary ordering. For example, for $n=4$, $\l=2$, we have
labels $12, 13, 14, 23, 24, 34$ for rows one through six respectively.
\end{notation}

A basis for $\V^{\wedge\ell}$ is given by products 
$$e_\I=e_{i_1}e_{i_2}\cdots e_{i_\ell}$$
$\I\subset [n]$, where we consider $\I$ as an ordered $\ell$-tuple.
Given a matrix $X$ acting on $\V$, let
$$y_i=\sum_j X_{ij}e_j$$
with corresponding products $y_\I$.
Then the matrix $X^{\wedge \ell}$ has entries given by the coefficients in the expansion
$$y_\I=\sum_{\J} (X^{\wedge \ell})_{\I\J} e_\J$$
where the anticommutation rules are used to order the factors in $e_\J$.
Note that the coefficient of $e_j$ in $y_i$ is $X_{ij}$ itself. And for $n>3$, 
the coefficient of $e_{34}$ in $y_{12}$ is
$$\det\begin{pmatrix} X_{13}&X_{14}\\X_{23}&X_{24}\end{pmatrix}$$
We see that in general the $\I\J$ entry  of $X^{\wedge \ell}$ is the minor of $X$
with row labels $\I$ and column labels $\J$. A standard term for the matrix
$X^{\wedge \ell}$ is a \textsl{compound matrix}. Noting that $X^{\wedge \ell}$ is
$\binom{n}{\ell}\times\binom{n}{\ell}$, in particular $\ell=n$ yields the one-by-one matrix
with entry equal to $(X^{\wedge n})_{\U\U}=\det X$. 
\bigskip

In this work, we will use the algebra of \textsl{zeons}, standing for ``zero-ons", or more specifically,
``zero-square bosons". That is, we assume that the variables $e_i$ satisfy the properties
$$e_ie_j=e_je_i \qquad \text{and} \qquad e_i^2=0$$
A basis for the algebra is again given by $e_\I$, $\I\subset [n]$. At level $\ell$, the induced matrix
$X^{\vee\ell}$ has $\I\J$ entries according to the expansion of $y_\I$
$$y_i=\sum_j X_{ij}e_j \qquad \longrightarrow \qquad  y_\I=\sum_{\J} (X^{\vee \ell})_{\I\J} e_\J$$
similar to the Grassmann case. Since the variables commute, we see that the $\I\J$ entry
of $X^{\vee\ell}$ is the \textsl{permanent} of the submatrix with rows $\I$ and columns $\J$. In particular,
$(X^{\vee n})_{\U\U}=\per X$. We refer to the matrix $X^{\vee\ell}$ as the ``$\ell^{\rm th}$ zeon power of $X$".  \bigskip

\begin{subsection}{Functions on the power set of $[n]$}
Note that $X^{\vee\ell}$ is indexed by $\ell$-sets. Suppose $X_f$ represents a function $f\colon [n]\to[n]$.
So it is a zero-one matrix with $(X_f)_{ij}=1$ the single entry in row $i$ if $f$ maps $i$ to $j$. The 
$\ell^{\rm th}$ zeon power of $X$ is the matrix of the induced map on $\ell$-sets. If $f$ maps an $\ell$-set 
$\I$ to one of lower cardinality, then the corresponding row in $X^{\vee\ell}$ has all zero entries. Thus,
the induced matrices in general correspond to ``partial functions". \bigskip

However, if $X$ is a permutation matrix, then $X^{\vee\ell}$ is a permutation matrix for all $0\le\ell\le n$.
So, given a group of permutation matrices, the map $X\to X^{\vee\ell}$ is a representation of the group.
\end{subsection}

\begin{subsection}{Zeon powers of $sI+tX$}
Our main theorem computes the $\ell^{\rm th}$ zeon power of $sI+tX$ for an  $n\times n$ matrix $X$, where $s$ and $t$ are
scalar variables.

\begin{theorem} \label{thm:zpowers}
For a given matrix $X$, for $0\le\ell\le n$, and indices $|\I|=|\J|=\ell$,
$$\left((sI+tX)^{\vee\ell}\right)_{\I\J}
=\sum_{0\le j\le\ell} s^{\ell-j}t^j\,\sum_{\substack{A\subset \I\cap\J\\|\rmA|=\ell-j}}\left(X^{\vee j}\right)_{\I \setminus \rmA,\J \setminus \rmA}$$
\end{theorem}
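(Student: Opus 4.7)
The plan is to unwind the definition of the zeon power as a permanent and exploit the fact that the identity matrix contributes only diagonal entries. By the discussion preceding the theorem, for any $n\times n$ matrix $Y$ and $\ell$-sets $\I,\J$, the entry $(Y^{\vee\ell})_{\I\J}$ equals the permanent of the $\ell\times\ell$ submatrix $Y_{\I,\J}$. So I would start by writing
$$
\left((sI+tX)^{\vee\ell}\right)_{\I\J}
=\sum_{\sigma\colon\I\to\J}\prod_{i\in\I}\bigl(s\,\delta_{i,\sigma(i)}+t\,X_{i,\sigma(i)}\bigr),
$$
the sum ranging over bijections $\sigma$ from $\I$ to $\J$.

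Next I would expand each binomial factor by distributivity. Each term of the resulting expansion corresponds to a choice, for each $i\in\I$, of either the $s\,\delta_{i,\sigma(i)}$ piece or the $t\,X_{i,\sigma(i)}$ piece. Let $\rmA\subseteq\I$ denote the set of indices on which the $s$-piece is chosen. Because of the Kronecker delta, such a term vanishes unless $\sigma(i)=i$ for every $i\in \rmA$; in particular this forces $\rmA\subseteq\I\cap\J$ and $\sigma|_\rmA=\mathrm{id}_\rmA$. On the complement, $\sigma$ restricts to an arbitrary bijection $\I\setminus\rmA\to\J\setminus\rmA$.

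Grouping the expansion first by $\rmA$ and then by the restriction $\sigma|_{\I\setminus\rmA}$, I obtain
$$
\sum_{\rmA\subseteq\I\cap\J} s^{|\rmA|}\,t^{\ell-|\rmA|}
\sum_{\tau\colon\I\setminus\rmA\to\J\setminus\rmA}\prod_{i\in\I\setminus\rmA}X_{i,\tau(i)}
=\sum_{\rmA\subseteq\I\cap\J}s^{|\rmA|}t^{\ell-|\rmA|}\,\per X_{\I\setminus\rmA,\,\J\setminus\rmA},
$$
where the inner sum is recognized as the permanent of the indicated submatrix, i.e.\ $\bigl(X^{\vee(\ell-|\rmA|)}\bigr)_{\I\setminus\rmA,\J\setminus\rmA}$. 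Reindexing by $j=\ell-|\rmA|$ yields exactly the claimed identity.

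The argument is essentially bookkeeping, so I do not expect a serious obstacle; the only step that needs a little care is checking that the Kronecker-delta constraint indeed pins down $\rmA\subseteq\I\cap\J$ and the identity restriction on $\rmA$, and that the residual sum over $\tau$ is precisely a permanent of the complementary submatrix rather than some larger or smaller object. Once that is observed, the rearrangement into the stated double sum is immediate.
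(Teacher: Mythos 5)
Your proof is correct and is essentially the paper's argument in different clothing: the paper expands the zeon product $\prod_{i\in\I}(se_i+t\xi_i)$ and reads off the coefficient of $e_\J$, while you expand the permanent of the submatrix over bijections $\I\to\J$; in both cases the key step is the distributive expansion indexed by the set $\rmA\subseteq\I$ of factors contributing the $s$-term, with the identity/Kronecker-delta forcing $\rmA\subseteq\I\cap\J$ and leaving exactly $\per X_{\I\setminus\rmA,\J\setminus\rmA}=\left(X^{\vee j}\right)_{\I\setminus\rmA,\J\setminus\rmA}$ on the complement. The only cosmetic difference is that the paper lets the zeon relations $e_i^2=0$ do the bookkeeping that your Kronecker deltas handle explicitly.
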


\begin{figure}[!ht]
\begin{center}
\scalebox{.5}{\input{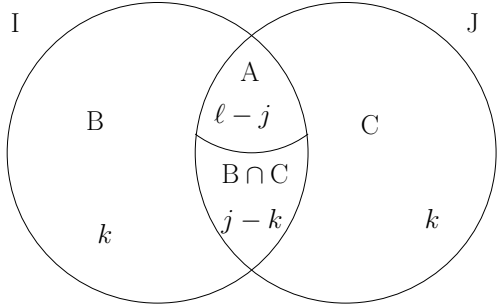}}
\caption{Configuration of sets. $\rmB=\I\setminus\rmA$, $\rmC=\J\setminus\rmA$ .}\label{fig:perms}
\end{center}
\end{figure}

\begin{proof} Start with $y_i=se_i+t\xi_i$, where $\xi_i=\sum_j X_{ij}e_j$.
Given $\I=(i_1,\ldots,i_\ell)$, we want the coefficient of $e_\J$ in the expansion of the product
$y_\I=y_{i_1}\cdots y_{i_\ell}$. Now,
$$y_\I=(se_{i_1}+t\xi_{i_1})\cdots(se_{i_\ell}+t\xi_{i_\ell})$$
Choose $\rmA\subset\I$ with $|\rmA|=\ell-j$, $0\le j\le \ell$. A typical term of the product has the form
$$s^{\ell-j}t^j e_\rmA \xi_\rmB$$
where $\rmA\cap\rmB=\emptyset$, $\rmB=\I\setminus \rmA$. $\xi_\rmB$ denotes the product of terms $\xi_i$ with indices in $\rmB$.
Expanding, we have
$$\xi_\rmB=\sum_C \left(X^{\vee j}\right)_{\rmB\rmC}\,e_\rmC$$
and
$$e_A\xi_\rmB=\sum_C \left(X^{\vee j}\right)_{\rmB\rmC}\,e_\rmA e_\rmC$$
Thus, for a contribution to the coefficient of $e_{\J}$, we have 
$\rmA\cup\rmC=\J$, where $\rmA\cap\rmC=\emptyset$. I.e., $\rmC=\J\setminus \rmA$ and $\rmA\subset\I\cap\J$. 
So the coefficient of $s^{\ell-j}t^j$ is as stated.
\end{proof}
\end{subsection}

\begin{subsection}{Trace formula}
Another main feature is the \textsl{trace formula} which shows the permanent of $I+tX$ as the generating function for the
traces of the zeon powers of $X$. This is the zeon analog of the theorem of MacMahon for representations on symmetric tensors.
\begin{theorem} \label{thm:ztrace}
We have the formula
$$\per (sI+tX)=\sum_j s^{n-j}t^j \tr X^{\vee j}$$
\end{theorem}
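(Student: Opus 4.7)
The plan is to recognize this as a direct specialization of Theorem~\ref{thm:zpowers}. Since $X^{\vee n}$ is a $1\times 1$ matrix indexed by the single $n$-subset $\U=[n]$, and its sole entry is $\per X$ (as noted just before the subsection on functions on the power set), the same reasoning gives $(sI+tX)^{\vee n}_{\U\U}=\per(sI+tX)$. So I would begin by writing
$$\per(sI+tX) = \bigl((sI+tX)^{\vee n}\bigr)_{\U\U}$$
and then apply Theorem~\ref{thm:zpowers} with $\ell=n$ and $\I=\J=\U$.

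Next I would simplify the right-hand side of that specialization. With $\I=\J=\U$, the condition $\rmA\subset\I\cap\J$ is just $\rmA\subset[n]$, and the constraint $|\rmA|=\ell-j=n-j$ means $\rmA$ ranges over all $(n-j)$-subsets of $[n]$. Writing $\K=\U\setminus\rmA$, which is then an arbitrary $j$-subset of $[n]$, the inner sum becomes
$$\sum_{\substack{\rmA\subset[n]\\|\rmA|=n-j}} (X^{\vee j})_{\U\setminus\rmA,\,\U\setminus\rmA} \;=\; \sum_{\substack{\K\subset[n]\\|\K|=j}} (X^{\vee j})_{\K\K} \;=\; \tr X^{\vee j},$$
since summing the diagonal entries of $X^{\vee j}$ over all $j$-subset labels is precisely the trace of the $\binom{n}{j}\times\binom{n}{j}$ compound-type matrix $X^{\vee j}$.

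Assembling these two steps yields exactly the claimed identity
$$\per(sI+tX)=\sum_{j=0}^{n} s^{n-j}t^j\,\tr X^{\vee j}.$$
There is no real obstacle here beyond careful bookkeeping: the main content of the statement already lives in Theorem~\ref{thm:zpowers}, and the trace formula is the top-level ($\ell=n$, $\I=\J=\U$) case, with the complementation $\rmA\leftrightarrow\U\setminus\rmA$ converting the sum over subsets of size $n-j$ into a sum of diagonal entries of $X^{\vee j}$.
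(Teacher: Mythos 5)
Your proposal is correct and is essentially identical to the paper's own proof: both specialize Theorem~\ref{thm:zpowers} to $\ell=n$, $\I=\J=\U$, identify $\per(sI+tX)$ as the $\U\U$ entry of $(sI+tX)^{\vee n}$, and convert the sum over $(n-j)$-subsets $\rmA$ into the trace of $X^{\vee j}$ via complementation. No differences worth noting.
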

\begin{proof}
The permanent of $sI+tX$ is the $\U\U$ entry of $(sI+tX)^{\vee n}$. Specialize $\I=\J=\U$ in Theorem \ref{thm:zpowers}.
So $\rmA$ is any $(n-j)$-set with $\I\setminus A=\J\setminus A=A'$, its complement in $[n]$. Thus
\begin{align*}
\per (sI+tX)&=\left((sI+tX)^{\vee n}\right)_{\U\U} \\ 
&=\sum_{0\le j\le n} s^{n-j}t^j\,\sum_{|\rmA|=n-j}\left(X^{\vee j}\right)_{\rmA',\rmA'}\\
&=\sum_{0\le j\le n} s^{n-j}t^j\,\tr X^{\vee j}
\end{align*}
as required.
\end{proof}
\end{subsection}

\begin{subsection}{Permutation groups}
Let $X$ be an $n\times n$ permutation matrix. We can express $\per(I+tX)$ in terms of the cycle decomposition
of the associated permutation.

\begin{proposition} \label{prop:cycles} For a permutation matrix $X$,
$$\per(I+tX)=\prod_{0\le \ell\le n}(1+t^\ell)^{n_X(\ell)}$$
where $n_X(\ell)$ is the number of cycles of length $\ell$ in the cycle decomposition of the 
corresponding permutation.
\end{proposition}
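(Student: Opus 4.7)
The plan is to apply Theorem~\ref{thm:ztrace} with $s=1$ and evaluate each trace $\tr X^{\vee j}$ combinatorially. Setting $s=1$ in the trace formula gives
$$\per(I+tX)=\sum_{j=0}^n t^j\,\tr X^{\vee j},$$
so the whole question reduces to understanding $\tr X^{\vee j}$ when $X$ is a permutation matrix.

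The first real step is to identify $X^{\vee j}$ explicitly. If $X$ is the permutation matrix of $\pi\in S_n$, then $y_i=\sum_k X_{ik}e_k=e_{\pi(i)}$; since the $e_k$ commute, $y_\I=e_{\pi(\I)}$ with no sign or reordering issue (and no collapse to $0$, because $\pi$ is injective). Thus $X^{\vee j}$ is itself the permutation matrix of the induced action of $\pi$ on the $j$-subsets of $[n]$, as already noted in \S2.1. Consequently $\tr X^{\vee j}$ equals the number of $\pi$-invariant $j$-subsets of $[n]$, and
$$\per(I+tX)=\sum_{\substack{S\subseteq[n]\\ \pi(S)=S}} t^{|S|}.$$

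The second step is to enumerate $\pi$-invariant subsets. A subset $S\subseteq[n]$ satisfies $\pi(S)=S$ if and only if $S$ is a union of whole cycles of $\pi$. Therefore each such $S$ is specified by independently choosing, for each cycle $c$ of $\pi$, whether to include $c$ entirely (contributing a factor $t^{|c|}$ to $t^{|S|}$) or to exclude it (contributing $1$). This factors the sum over cycles, and grouping cycles by length gives
$$\sum_{S:\pi(S)=S} t^{|S|}=\prod_c\bigl(1+t^{|c|}\bigr)=\prod_{0\le\ell\le n}\bigl(1+t^\ell\bigr)^{n_X(\ell)}.$$

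There is no real obstacle here: the proof is a clean composition of the trace formula with a two-line enumeration. The only subtle point worth stating explicitly is the identification of $X^{\vee j}$ as the permutation representation of $\pi$ on $j$-sets, which is precisely where commutativity of the zeon generators is used: no signs arise when rewriting $e_{\pi(\I)}$ in the dictionary order of the basis.
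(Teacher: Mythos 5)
Your proof is correct. Both you and the paper rest on the trace formula (Theorem~\ref{thm:ztrace}), but you organize the argument differently. The paper first factors the permanent over the cycle decomposition --- using that a permutation matrix is, up to simultaneous row/column permutation, block diagonal with one block per cycle, so $\per(I+tX)=\prod_c\per(I+tX_c)$ --- and only then applies the trace formula to a single $\ell$-cycle, where the count of invariant $j$-sets is trivially $1$ for $j=0$ and $j=\ell$ and $0$ otherwise. You instead apply the trace formula globally to $X$, identify $\tr X^{\vee j}$ as the number of $\pi$-invariant $j$-subsets, and obtain the product form only at the end, from the fact that invariant subsets are exactly unions of whole cycles so the generating function $\sum_{S:\pi(S)=S}t^{|S|}$ factors over cycles. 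The trade-off: the paper's route needs the multiplicativity of the permanent under this block decomposition (asserted briefly there), while yours needs the slightly more substantive combinatorial identification of the global trace; in exchange your version never leaves the combinatorics of fixed sets and makes explicit the point --- only implicit in the paper --- that $X^{\vee j}$ is the permutation matrix of the induced action on $j$-sets, so its trace counts fixed $j$-sets. Both are complete; yours is a legitimate and arguably more self-contained alternative.
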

\begin{proof}Decomposing the permutation associated to $X$ yields a decomposition into
invariant subspaces of the underlying vector space $\V$. So $\per(I+tX)$ will be the product
of $\per(I+tX_c)$ as $c$ runs through the corresponding cycles with $X_c$ the restriction of $X$ to the invariant subspace for each $c$. 
So we have to check that if $X$ acts on $\V^{\ell}$ as a cycle of length $\ell$, then $\per(I+tX)=1+t^\ell$. For this,
apply Theorem \ref{thm:ztrace}. Apart from level zero, there is only one set fixed by any $X^{\vee j}$, namely
when $j=\ell$. So the trace of $X^{\vee j}$ is zero unless $j=\ell$ and then it is one. The result follows. 
\flushright\qedhere
\end{proof}
\begin{subsubsection}{Cycle index. Orbits on $\ell$-sets.}
Now consider a group, $G$, of permutation matrices. We have the cycle index
$$Z_G(z_1, z_2,\ldots,z_n)=\frac{1}{|G|}\,\sum_{X\in G} z_1^{n_X(1)}z_2^{n_X(2)}\cdots z_n^{n_X(n)}$$
each $z_\ell$ corresponding to $\ell$-cycles in the cycle decomposition associated to the $X$'s.
From Proposition \ref{prop:cycles},  we have an expression in terms of permanents. Combining with the trace
formula, we get
\begin{theorem} Let $G$ be a permutation group of matrices. Then we have
\begin{align*}
\frac{1}{|G|}\,\sum_{X\in G} \per(I+tX) &= Z_G(1+t,1+t^2,\ldots,1+t^n)\\ &=\sum_{\ell} t^{\ell}\, \#  (\mathrm{\,orbits\ on\ }\ell\text{\rm  -sets\,})\ .
\end{align*}
\end{theorem}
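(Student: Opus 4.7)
The plan is to prove the two equalities separately, the first by direct substitution using Proposition~\ref{prop:cycles}, and the second by combining the trace formula (Theorem~\ref{thm:ztrace}) with the Cauchy--Frobenius--Burnside lemma.

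For the first equality, I would simply apply Proposition~\ref{prop:cycles} inside the average:
$$\frac{1}{|G|}\sum_{X\in G}\per(I+tX)=\frac{1}{|G|}\sum_{X\in G}\prod_{\ell=1}^n (1+t^\ell)^{n_X(\ell)}.$$
The right-hand side is, by definition of the cycle index, exactly $Z_G(1+t,1+t^2,\ldots,1+t^n)$, since the monomial $z_1^{n_X(1)}\cdots z_n^{n_X(n)}$ is being evaluated at $z_\ell=1+t^\ell$. This step is essentially tautological once Proposition~\ref{prop:cycles} is in hand.

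For the second equality, the key observation is that for a permutation matrix $X$, the induced matrix $X^{\vee j}$ is itself a permutation matrix on the set of $j$-subsets of $[n]$ (as noted in the subsection on functions on the power set). Hence $\tr X^{\vee j}$ equals the number of $j$-sets fixed by $X$. Applying the trace formula of Theorem~\ref{thm:ztrace} with $s=1$ gives
$$\per(I+tX)=\sum_{j=0}^n t^j\,\tr X^{\vee j}=\sum_{j=0}^n t^j\,\#\{j\text{-sets fixed by }X\}.$$
Averaging over $X\in G$ and interchanging the two sums yields
$$\frac{1}{|G|}\sum_{X\in G}\per(I+tX)=\sum_{j=0}^n t^j\cdot\frac{1}{|G|}\sum_{X\in G}\#\{j\text{-sets fixed by }X\}.$$
By the Cauchy--Frobenius--Burnside lemma, the inner average is precisely the number of orbits of $G$ on $j$-sets, which finishes the proof.

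There is no real obstacle here: the substance lies in the results already proved (Proposition~\ref{prop:cycles} and Theorem~\ref{thm:ztrace}), and the only external ingredient is Burnside's lemma. The mild point to verify carefully is that $X^{\vee j}$ is a permutation matrix on $j$-sets whenever $X$ is a permutation matrix, so that its trace equals the number of fixed $j$-sets; this was already observed in the discussion preceding Theorem~\ref{thm:zpowers}.
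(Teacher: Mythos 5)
Your proof is correct and follows essentially the same route as the paper: the first equality is the direct substitution of Proposition~\ref{prop:cycles} into the cycle index, and the second combines the trace formula (Theorem~\ref{thm:ztrace}) at $s=1$ with the observation that $\tr X^{\vee j}$ counts fixed $j$-sets, finishing with the Cauchy--Burnside lemma applied to $G^{\vee j}$. The paper only sketches this in a remark, so your write-up is a faithful, slightly more explicit version of the intended argument.
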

\begin{remark} This result refers to three essential theorems in group theory acting on sets. Equality of the first and last expressions
is the `permanent' analog of Molien's Theorem, which is the case for a group acting on the symmetric tensor algebra. That the cycle index
counts orbits on subsets is an instance of Polya Counting, with two colors. The last expression follows by the
Cauchy-Burnside Lemma applied to the groups $G^{\vee\ell}=\{X^{\vee\ell}\}_{X\in G}$.
\end{remark}
\end{subsubsection}

\begin{subsubsection}{Centralizer algebra and Johnson scheme}
Given a group, $G$, of permutation matrices, an important question is to determine 
the set (among all matrices) of matrices commuting with all of the matrices in $G$. This is the 
\textsl{centralizer algebra} of the group. For the symmetric group, the only matrices are $I$ and $J$.
For the action of the symmetric group on $\ell$-sets, a basis for the centralizer algebra is given by the
incidence matrices for the Johnson distance. These are the same as the adjacency matrices for the Johnson
(association) scheme. Recall that the Johnson distance between two $\ell$-sets $\I$ and $\J$ is
$$\text{dist}_{\rm JS}(\I,\J)={\textstyle\frac12}\, |\I\DD\J|=|\I\setminus\J|=|\J\setminus\I|$$
The corresponding matrices $\JS{k}{n\ell}$ are defined by
$$(\JS{k}{n\ell})_{\I\J}=\begin{cases} 1, & \text{if dist}_{\rm JS}(\I,\J)=k \\ 0& \text{otherwise}\end{cases}$$
As it is known, \cite[p.\,36]{CAM}, that a basis for the centralizer algebra is given by the orbits of the group $G^2$,
acting on pairs, the Johnson basis is a basis for the centralizer algebra.
Since the Johnson distance is symmetric, it suffices to look at $G^{\vee 2}$. \bigskip

Now we come to the question that is one starting point for this work. If $I$ and $J$ are the only matrices commuting with
all elements (as matrices) of the symmetric group, then since the map $G\to G^{\vee\ell}$ is a homomorphism, we know that
$I^{\vee\ell}$ and $J^{\vee\ell}$ are in the centralizer algebra of $G^{\vee\ell}$. The question is: how to obtain the rest?
The, perhaps surprising, answer is that in fact one can obtain the complete Johnson basis from $I$ and $J$ alone.
This will be one of the main results, Theorem \ref{thm:xnl}.
\end{subsubsection} 

\begin{subsubsection}{Permanent of $sI+tJ$}
First, let us  consider $sI+tJ$.
\begin{proposition}
We have the formula
\begin{equation}\label{eq:perst}
\per(sI+tJ)=n!\,\sum_{0\le \ell\le n} \frac{s^\ell t^{n-\ell}}{\ell!}
\end{equation}
\end{proposition}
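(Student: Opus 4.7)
The plan is to apply the trace formula (Theorem~\ref{thm:ztrace}) with $X=J$, which converts the problem into computing $\tr J^{\vee j}$ for each $0\le j\le n$. Indeed, the formula immediately gives
$$\per(sI+tJ)=\sum_{0\le j\le n} s^{n-j}t^j\,\tr J^{\vee j},$$
so the entire task reduces to evaluating these traces.

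Next I would compute $(J^{\vee j})_{\I\I}$ for an $\ell$-set $\I$ with $|\I|=j$. By the construction of the zeon power, this entry is the permanent of the principal $j\times j$ submatrix of $J$ indexed by $\I$. Since every entry of $J$ is $1$, that submatrix is the all-ones $j\times j$ matrix, whose permanent counts all bijections from $[j]$ to $[j]$ and therefore equals $j!$. Summing over the $\binom{n}{j}$ diagonal positions of $J^{\vee j}$ gives
$$\tr J^{\vee j}=\binom{n}{j}\cdot j!=\frac{n!}{(n-j)!}.$$

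Finally I would substitute this back into the trace formula and reindex by $\ell=n-j$:
$$\per(sI+tJ)=\sum_{0\le j\le n}\frac{n!}{(n-j)!}\,s^{n-j}t^j=n!\sum_{0\le \ell\le n}\frac{s^\ell t^{n-\ell}}{\ell!},$$
which is the desired identity. There is no real obstacle here: once the trace formula is invoked, the only content is the trivial observation that every submatrix of $J$ is all-ones and hence has permanent $j!$. As a sanity check, one could alternatively apply Theorem~\ref{thm:zpowers} directly with $\I=\J=\U$, summing over subsets $\rmA$ of size $n-j$; the count $\binom{n}{n-j}\cdot j!$ reproduces the same coefficient, confirming the computation.
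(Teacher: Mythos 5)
Your proof is correct and follows essentially the same route as the paper: invoke the trace formula with $X=J$, observe that every $j\times j$ submatrix of $J$ is all-ones so $\tr J^{\vee j}=\binom{n}{j}\,j!$, and reindex the resulting sum. The only cosmetic slip is calling $\I$ an ``$\ell$-set'' when you mean a $j$-set; otherwise nothing to add.
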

\begin{proof}
For $X=J$, we see directly, since all entries equal one in all submatrices, that
$$(J^{\vee\ell})_{\I\J}=\ell!$$
for all $\I$ and $\J$. Taking traces
$$\tr J^{\vee\ell}=\binom{n}{\ell}\,\ell!$$
and by the trace formula, Theorem \ref{thm:ztrace},
$$\per(sI+tJ)=\sum_\ell \binom{n}{\ell}\,\ell!\,s^{n-\ell}t^\ell=\sum_\ell \frac{n!}{(n-\ell)!}\,s^{n-\ell}t^\ell\ .$$
Reversing the order of summation yields the result stated.
\end{proof}
\begin{corollary}
For varying $n$, we will explicitly denote $p_n(s,t)=\per (sI_n+tJ_n)$. Then, with \hfill\break $p_0(s,t)=1$,
$$\sum_{n=0}^\infty \frac{z^n}{n!}p_n(s,t)=\frac{e^{sz}}{1-tz}\ .$$
\end{corollary}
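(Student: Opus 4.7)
The plan is to substitute the explicit formula for $p_n(s,t)$ from the preceding proposition into the generating function and recognize the result as a product of two familiar series. Specifically, the proposition gives
$$p_n(s,t)=n!\sum_{\ell=0}^{n}\frac{s^\ell t^{n-\ell}}{\ell!},$$
so the factor of $n!$ will cancel against the $1/n!$ in the generating function, leaving a clean double sum in $n$ and $\ell$.

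Next I would reindex. Setting $m=n-\ell$ and interchanging the order of summation (justified formally as an identity of formal power series in $z$, with $s,t$ as parameters; or, for $|z|$ small, by absolute convergence), the double sum becomes
$$\sum_{\ell=0}^\infty\sum_{m=0}^\infty \frac{s^\ell}{\ell!}\,(tz)^m\,z^\ell
=\left(\sum_{\ell=0}^\infty\frac{(sz)^\ell}{\ell!}\right)\left(\sum_{m=0}^\infty (tz)^m\right).$$
The first factor is $e^{sz}$ and the second is the geometric series $1/(1-tz)$, giving the claimed formula. The base case $p_0(s,t)=1$ matches the value $1$ of the right-hand side at $n=0$, so the convention for the empty permanent is consistent.

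There is no real obstacle here; the only thing to watch is that the double sum factors cleanly, which it does precisely because the summand separates as a function of $\ell$ times a function of $m=n-\ell$ after the reindexing. So the entire argument is essentially one substitution followed by a rearrangement of a double series.
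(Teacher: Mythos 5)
Your proof is correct and is the natural argument the paper leaves implicit: substitute the closed form $p_n(s,t)=n!\sum_{\ell\le n} s^\ell t^{n-\ell}/\ell!$ from the preceding proposition and recognize the resulting double sum as the Cauchy product of $e^{sz}$ with the geometric series $1/(1-tz)$. The reindexing $m=n-\ell$ and the factorization are exactly right, so nothing further is needed.
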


The Corollary exhibits the operational formula
$$p_n(s,t)=\frac{1}{1-tD_s}s^n$$
where $D_s=d/ds$. By inspection, this agrees with \eqref{eq:perst} as well.

Observe that equation \eqref{eq:perst} can be rewritten as
$$\per(sI+tJ)=\int_0^\infty (s+ty)^n\,e^{-y}\,dy$$
that is, these are ``moment polynomials" for the exponential distribution with an additional scale parameter.\bigskip

We proceed to examine these moment polynomials in detail.
\end{subsubsection} 
\end{subsection} 

\section{Exponential polynomials}
For the exponential distribution, with density $e^{-y}$ on $(0,\infty)$, the \textsl{moment polynomials} are defined as
$$h_n(x)=\int_0^\infty (x+y)^n\,e^{-y}\,dy$$
The exponential embeds naturally into the family of weights of the form $x^m\,e^{-x}$ on $(0,\infty)$ as for generalized Laguerre polynomials.
We define correspondingly
\begin{equation}\label{eq:hnm}
h_{n,m}(x,t)=\int_0^\infty (x+ty)^n\, (ty)^m\,e^{-y}\,dy
\end{equation}
for nonnegative integers $n,m$, introducing a factor of $y^m$ and a scale factor $t$. 
We refer to these as \textsl{exponential moment polynomials}.

\begin{proposition} \label{prop:eply}
Observe the following properties of the exponential moment polynomials. \bigskip

1. The generating function
$$ \frac{1}{t^m m!}\,\sum_{n=0}^\infty \frac{z^n}{n!}h_{n,m}(x,t)=\frac{e^{zx}}{(1-tz)^{1+m}}$$
for  $|tz|<1$. \bigskip

2. The operational formula
$$\frac{1}{t^m m!}\,h_{n,m}(x,t)=(I-tD)^{-(m+1)} x^n$$
where $I$ is the identity operator and $D=d/dx$.  \bigskip

3. The explicit form
$$h_{n,m}(x,t)=\sum_{j=0}^n\binom{n}{j}(m+j)!\,x^{n-j}t^{m+j}$$
\end{proposition}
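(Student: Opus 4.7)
The plan is to establish Part 3 first, since it makes Parts 1 and 2 essentially routine. Starting from the defining integral \eqref{eq:hnm}, I expand $(x+ty)^n$ by the binomial theorem inside the integral, pull the $x$-independent factors out, and obtain
$$h_{n,m}(x,t)=\sum_{j=0}^n \binom{n}{j}x^{n-j}t^{m+j}\int_0^\infty y^{m+j}e^{-y}\,dy.$$
Recognizing the remaining integral as $\Gamma(m+j+1)=(m+j)!$ delivers the explicit form immediately.

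For Part 1, rather than resumming the explicit form, the cleanest route is to interchange sum and integral directly in \eqref{eq:hnm}. For $|tz|<1$ the absolute convergence of $\sum_n z^n(x+ty)^n/n!$ against the weight $y^m e^{-y}$ is dominated on $(0,\infty)$, so
$$\sum_{n=0}^\infty \frac{z^n}{n!}h_{n,m}(x,t)=\int_0^\infty e^{z(x+ty)}(ty)^m e^{-y}\,dy=e^{zx}\,t^m\int_0^\infty y^m e^{-(1-tz)y}\,dy.$$
A substitution $u=(1-tz)y$ turns the last integral into $m!/(1-tz)^{m+1}$; dividing through by $t^m m!$ gives the stated generating function. (As a check, one can re-sum the explicit form from Part 3 using $\sum_j \binom{m+j}{j}u^j = (1-u)^{-(m+1)}$ and recover the same answer.)

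For Part 2, the generating function from Part 1 does most of the work. Since $D$ acts on $e^{zx}$ as multiplication by $z$, we have $(I-tD)^{-(m+1)}e^{zx} = e^{zx}(1-tz)^{-(m+1)}$. Expanding $e^{zx}=\sum_n z^n x^n/n!$ and comparing with Part 1 coefficient-by-coefficient in $z^n/n!$ yields $(I-tD)^{-(m+1)}x^n = h_{n,m}(x,t)/(t^m m!)$. Alternatively, one can expand $(I-tD)^{-(m+1)}=\sum_k \binom{m+k}{k}t^k D^k$ as a formal binomial series and match term-by-term against Part 3 using $D^k x^n = n!/(n-k)!\,x^{n-k}$ and $\binom{n}{k}\binom{m+k}{k}k! = \binom{n}{k}(m+k)!/m!$.

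No step here is a genuine obstacle; the only point requiring mild care is justifying the interchange of summation and integration in Part 1, which is immediate from absolute convergence for $|tz|<1$. The rest is bookkeeping with the binomial series and the gamma-function values of the moments of $e^{-y}$.
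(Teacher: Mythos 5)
Your proof is correct. Parts 1 and 3 follow exactly the paper's route: Part 3 by expanding $(x+ty)^n$ under the integral and evaluating the Gamma integral, Part 1 by summing the defining integral against $z^n/n!$ and exchanging sum and integral (you are more careful than the paper about justifying the interchange, which is a small plus). The only genuine divergence is Part 2. The paper proves it directly from the Euler-integral representation of the resolvent, writing $t^m m!\,(I-tD)^{-(m+1)}x^n=t^m\int_0^\infty y^m e^{-(I-tD)y}x^n\,dy$ and invoking the shift formula $e^{aD}f(x)=f(x+a)$ to land immediately on the defining integral of $h_{n,m}$; this keeps all three parts anchored to the single integral formula \eqref{eq:hnm}. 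You instead derive Part 2 as a corollary of Part 1, using that $D$ acts on $e^{zx}$ as multiplication by $z$ and comparing coefficients of $z^n/n!$ (and you give a second, purely formal verification against Part 3 via the binomial series for $(I-tD)^{-(m+1)}$, whose coefficient identity $\binom{m+k}{k}\,n!/(n-k)!=\binom{n}{k}(m+k)!/m!$ checks out). Your version makes the logical dependence $1\Rightarrow 2$ explicit and avoids operator-valued integrals; the paper's version is slightly slicker in that it never needs the generating function and exhibits the operational formula as a direct restatement of the moment integral. Both are complete.
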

\begin{proof} For the first formula, multiply the integral by $z^n/n!$ and sum to get
$$\int_0^\infty y^m\,e^{zx+zty-y}\,dy=e^{zx}\,\int_0^\infty y^m\,e^{-y(1-tz)}\,dy$$
which yields the stated result.  \bigskip

For the second, write
$$t^mm!\,(I-tD)^{-(m+1)}x^n=t^m\int_0^\infty y^m\,e^{-(I-tD)y}x^n\,dy=\int_0^\infty (ty)^m\,e^{-y}(x+ty)^n\,dy$$
using the shift formula $e^{aD}f(x)=f(x+a)$. \bigskip

For the third, expand $(x+ty)^n$ by the binomial theorem and integrate.
\end{proof}

A variation we will encounter in the following is
\begin{align} 
h_{n-m,m}(x,t)&=\sum_{j=0}^{n-m}\binom{n-m}{j}(m+j)!\,x^{n-m-j}t^{m+j}  \label{eq:exppoly0}\\
&=\sum_{j=m}^n\binom{n-m}{j-m}j!\,x^{n-j}t^j \label{eq:exppoly}\\
&=\sum_{j=m}^n\binom{n-m}{n-j}j!\,x^{n-j}t^j   \label{eq:exppoly1}\\
&=\sum_{j=0}^{n-m}\binom{n-m}{j}(n-j)!\,x^{j}t^{n-j}  \label{eq:exppoly2}
\end{align}
replacing the index $j\gets j-m$ for \eqref{eq:exppoly} and reversing the order of summation for the last line.  And for future reference, the integral formula,
\begin{equation}\label{eq:hnmm}
h_{n-m,m}(x,t)=\int_0^\infty (x+ty)^{n-m}\, (ty)^m\,e^{-y}\,dy
\end{equation}

\begin{subsection}{Hypergeometric form}
Generalized hypergeometric functions provide expressions for the exponential moment polynomials that are often convenient.
In the present context we will use ${}_2 F_0$ functions, defined by
$${}_2 F_0\left(\genfrac{}{}{0pt}{}{ a ,b}{\text{---}}\biggm| x \right)=\sum_{j=0}^\infty \frac{(a)_j\,(b)_j}{j!}\,x^j$$
where $(a)_j=\Gamma(a+j)/\Gamma(a)$ is the usual Pochhammer symbol. In particular, if $a$, e.g., is a negative integer, the series
reduces to a polynomial. Rearranging factors in the expressions for $h_{n,m}$, via \#3 in the above Proposition, and $h_{n-m,m}$, 
eq.\,\eqref{eq:exppoly0}, we can formulate these as ${}_2F_0$ hypergeometric functions.

\begin{proposition}\label{prop:hypgeo} We have the following expressions for exponential moment polynomials.
\begin{align*}
h_{n,m}(x,t)&=x^nt^m\,m!\,{}_2 F_0\left(\genfrac{}{}{0pt}{}{ -n ,1+m}{\text{---}}\biggm| -\frac{t}{x} \right)
\phantom{\biggm |}\\
h_{n-m,m}(x,t)&=x^{n-m}t^m\,m!\,{}_2 F_0\left(\genfrac{}{}{0pt}{}{ m-n ,1+m}{\text{---}}\biggm| -\frac{t}{x} \right)\ .
\end{align*}
\end{proposition}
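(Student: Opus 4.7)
The plan is to derive both identities directly from the closed form expressions for $h_{n,m}$ and $h_{n-m,m}$ already established in Proposition \ref{prop:eply}(3) and equation \eqref{eq:exppoly0}, by simply rewriting the coefficients in terms of Pochhammer symbols. No new integration or generating function argument is needed.

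For the first identity, I would start from
$$h_{n,m}(x,t)=\sum_{j=0}^{n}\binom{n}{j}(m+j)!\,x^{n-j}t^{m+j}$$
and factor out $x^{n}t^{m}m!$, leaving a sum in $(t/x)^{j}$. The two key conversions are
$$\binom{n}{j}=\frac{n!}{j!(n-j)!}=\frac{(-1)^{j}(-n)_{j}}{j!}\qquad\text{and}\qquad \frac{(m+j)!}{m!}=(1+m)_{j},$$
the first valid for $0\le j\le n$ (and both sides vanish for $j>n$, since $(-n)_{j}=0$ there). The $(-1)^{j}$ from the binomial identity combines with $(t/x)^{j}$ to give $(-t/x)^{j}$, and because $(-n)_{j}$ truncates the sum at $j=n$ we may freely extend the upper limit to $\infty$, producing exactly the defining series of the ${}_{2}F_{0}$.

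For the second identity, the same manipulation is applied to \eqref{eq:exppoly0},
$$h_{n-m,m}(x,t)=\sum_{j=0}^{n-m}\binom{n-m}{j}(m+j)!\,x^{n-m-j}t^{m+j},$$
using $\binom{n-m}{j}=(-1)^{j}(m-n)_{j}/j!$ (valid for $0\le j\le n-m$, with the Pochhammer symbol killing all higher terms) and the same identity $(m+j)!/m!=(1+m)_{j}$. After pulling out $x^{n-m}t^{m}m!$, the residual sum is the ${}_{2}F_{0}$ with parameters $(m-n,1+m)$ evaluated at $-t/x$.

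There is no real obstacle here; the calculation is routine, and the only point requiring a small remark is the extension of the finite sum to an infinite series, which is justified by the termination of the Pochhammer symbol $(-n)_{j}$ (respectively $(m-n)_{j}$) once $n$ and $m$ are non-negative integers with $m\le n$. I would present the two cases in parallel in a single short proof.
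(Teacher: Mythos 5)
Your proposal is correct and is exactly the route the paper takes: the paper presents the proposition as a rearrangement of the explicit sums from Proposition \ref{prop:eply}(3) and equation \eqref{eq:exppoly0}, which is precisely your conversion of $\binom{n}{j}$ to $(-1)^j(-n)_j/j!$ and $(m+j)!/m!$ to $(1+m)_j$. Your added remark on the termination of the series via the vanishing Pochhammer symbol is a harmless and correct elaboration of what the paper leaves implicit.
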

\end{subsection}

\section{Zeon powers of $sI+tJ$}\label{sec:ZP}

We want to calculate $(sI+tJ)^{\vee\ell}$, i.e., the $\binom{n}{\ell}\times\binom{n}{\ell}$ matrix with rows and columns labelled by
$\ell$-subsets $\I,\J\subset\{1,\ldots,n\}$ with the $\I\J$ entry equal to the permanent of the corresponding submatrix of 
$sI+tJ$. This is equivalent to the induced action of the original matrix $sI+tJ$ on the $\ell^{\rm th}$ zeon space $\V^{\vee\ell}$. \bigskip

\begin{theorem} \label{thm:xnl} The $\ell^{\rm th}$ zeon power of $sI+tJ$ is given by
$$(sI+tJ)^{\vee\ell}=\sum_k\sum_{j=k}^\ell\binom{\ell-k}{\ell-j}\,j!\,s^{\ell-j}t^j \,\JS{k}{n\ell}=\sum_k \,h_{\ell-k,k}(s,t)\,\JS{k}{n\ell}$$
where the $h$'s are exponential moment polynomials.
\end{theorem}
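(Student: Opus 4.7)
The plan is to apply Theorem~\ref{thm:zpowers} with $X=J$ and then group the resulting expression by Johnson distance. The key ingredient is the already-established identity $(J^{\vee j})_{\rmB\rmC}=j!$ for every pair of $j$-subsets $\rmB,\rmC$, which collapses the inner sum in Theorem~\ref{thm:zpowers} to a pure counting problem over subsets $\rmA\subset\I\cap\J$ of a given size.

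First, I would fix $\ell$-subsets $\I,\J\subset[n]$ and substitute $X=J$ into Theorem~\ref{thm:zpowers} to get
$$\left((sI+tJ)^{\vee\ell}\right)_{\I\J}=\sum_{0\le j\le\ell} s^{\ell-j}t^j\,j!\,\#\{\rmA\subset\I\cap\J:|\rmA|=\ell-j\}.$$
The cardinality of this family of subsets is simply $\binom{|\I\cap\J|}{\ell-j}$. Next, write the Johnson distance as $k=|\I\setminus\J|=|\J\setminus\I|$, so that $|\I\cap\J|=\ell-k$, and observe that the binomial coefficient $\binom{\ell-k}{\ell-j}$ vanishes unless $j\ge k$. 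Thus the $\I\J$ entry depends on $(\I,\J)$ only through $k=\mathrm{dist}_{\mathrm{JS}}(\I,\J)$, and equals
$$\sum_{j=k}^{\ell}\binom{\ell-k}{\ell-j}\,j!\,s^{\ell-j}t^j.$$

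Since the matrix $\JS{k}{n\ell}$ is exactly the indicator of $\mathrm{dist}_{\mathrm{JS}}(\I,\J)=k$, summing over $k$ the scalar coefficients against $\JS{k}{n\ell}$ recovers the first equality of the theorem. For the second equality, I would specialize formula \eqref{eq:exppoly} of Proposition~\ref{prop:eply} with $n\rightsquigarrow\ell$, $m\rightsquigarrow k$, $x\rightsquigarrow s$, giving
$$h_{\ell-k,k}(s,t)=\sum_{j=k}^{\ell}\binom{\ell-k}{j-k}\,j!\,s^{\ell-j}t^j,$$
and invoke the elementary symmetry $\binom{\ell-k}{j-k}=\binom{\ell-k}{\ell-j}$ to match the two expressions term by term.

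There is no real obstacle here: the only point requiring care is the bookkeeping that identifies $|\I\cap\J|$ with $\ell-k$ and verifies that the sum index automatically truncates to $j\ge k$ (so that the contributions partition cleanly across the Johnson classes). Everything else is a direct substitution into Theorem~\ref{thm:zpowers} together with the binomial-coefficient symmetry used to reconcile the sum with the exponential moment polynomial $h_{\ell-k,k}$.
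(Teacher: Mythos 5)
Your proposal is correct and follows essentially the same route as the paper's proof: specialize Theorem~\ref{thm:zpowers} to $X=J$, use $(J^{\vee j})_{\rmB\rmC}=j!$ to reduce the inner sum to counting the $\binom{\ell-k}{\ell-j}$ subsets $\rmA\subset\I\cap\J$ when $\mathrm{dist}_{\rm JS}(\I,\J)=k$, and match the result with $h_{\ell-k,k}(s,t)$ via the identity in equation~\eqref{eq:exppoly1}. The only nitpick is that the expansion you cite is the displayed variation \eqref{eq:exppoly}--\eqref{eq:exppoly1} following Proposition~\ref{prop:eply}, not part of the Proposition itself.
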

\begin{proof}
Choose $\I$ and $\J$ with $|\I|=|\J|=\ell$.
By Theorem \ref{thm:zpowers}, we have, using the fact that all of the entries of $J^{\vee j}$ are equal to $j!$,
\begin{align*}
\left((sI+tJ)^{\vee\ell}\right)_{\I\J}&=
\sum_{0\le j\le\ell} s^{\ell-j}t^j\,\sum_{\substack{A\subset \I\cap\J\\|\rmA|=\ell-j}}\left(J^{\vee j}\right)_{\I \setminus \rmA,\J \setminus \rmA}\\
&=\sum_{0\le j\le\ell} s^{\ell-j}t^j\,\sum_{\substack{A\subset \I\cap\J\\|\rmA|=\ell-j}}\,j!
\end{align*}
Now, if $\text{dist}_{\rm JS}(\I,\J)=k$, then $|\I\cap\J|=\ell-k$ and there are $\displaystyle \binom{\ell-k}{\ell-j}$ subsets $\rmA$ of
$\I\cap\J$ satisfying the conditions of the sum. Hence the result.
\end{proof}

Note that the specialization $\ell=n$, $k=0$, recovers equation \eqref{eq:perst}.  \bigskip

We can write the above expansion using the hypergeometric form of the exponential moment polynomials, Proposition \ref{prop:hypgeo},
$$(sI+tJ)^{\vee\ell}=
\sum_k s^{\ell-k}t^k\,k!\,{}_2 F_0\left(\genfrac{}{}{0pt}{}{ k-\ell ,1+k}{\text{---}}\biggm| -\frac{t}{s} \right)\,\JS{k}{n\ell}\ .$$

\subsection{Spectrum of the Johnson matrices}
Recall, e.g., \cite[p.\,220]{BI},  that the spectrum of the Johnson matrices for given $n$ and $\ell$ are the numbers 
\begin{equation}\label{eq:jspec}
\Lambda_k^{n\ell}(\alpha)=\sum_i \binom{\ell-\alpha}{i}\binom{n-\ell-\alpha+i}{i}\binom{\ell-i}{k-i}\,(-1)^{k-i}
\end{equation}
where the eigenvalue for given $\alpha$ has multiplicity $\displaystyle \binom{n}{\alpha}-\binom{n}{\alpha-1}$. \bigskip

For $\ell$-sets, the Johnson distance takes values from $0$ to $\min(\ell,n-\ell)$, with $\alpha$ taking values from 
that same range.

\subsection{The spectrum of $(sI+tJ)^{\vee\ell}$}
Recall that as the Johnson matrices are symmetric and generate a commutative algebra, they are simultaneously diagonalizable
by an orthogonal transformation of the underlying vector space. 
Diagonalizing the equation in Theorem \ref{thm:xnl}, we see that the spectrum of $(sI+tJ)^{\vee\ell}$ is given by
$$\sum_k \,h_{\ell-k,k}(s,t)\,\Lambda_k^{n\ell}(\alpha)\ .$$

\begin{proposition} \label{prop:spec} The spectrum of $(sI+tJ)^{\vee\ell}$ is given by
$$\frac{s^\alpha}{t^{n-\ell-\alpha}({\scriptstyle n-\ell-\alpha})!}\,h_{\ell-\alpha,n-\ell-\alpha}(s,t)
=\sum_i s^{\ell-i}t^i\,\binom{\ell-\alpha}{i}\binom{n-\ell-\alpha+i}{i}\,i!\,$$
for $0\le \alpha \le \min(\ell,n-\ell)$, with respective multiplicities $\displaystyle  \binom{n}{\alpha}-\binom{n}{\alpha-1}$.
\end{proposition}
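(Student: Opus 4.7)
The plan is to substitute the explicit formulas already on the table and reduce the whole assertion to one classical binomial identity. From the diagonalization remark immediately preceding the proposition, we already know the spectrum of $(sI+tJ)^{\vee\ell}$ is
\[
\sum_k h_{\ell-k,k}(s,t)\,\Lambda_k^{n\ell}(\alpha),
\]
and the multiplicities $\binom{n}{\alpha}-\binom{n}{\alpha-1}$ are inherited from the Johnson scheme, so nothing further is required on that side.

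First I would plug in the closed form $h_{\ell-k,k}(s,t)=\sum_{j=k}^{\ell}\binom{\ell-k}{j-k}\,j!\,s^{\ell-j}t^{j}$ from \eqref{eq:exppoly} and the spectrum formula \eqref{eq:jspec} for $\Lambda_k^{n\ell}(\alpha)$. After swapping the order of summation so that $i$ and $j$ are outer indices and $k$ is inner, the coefficient of $s^{\ell-j}t^{j}$ in the spectrum becomes
\[
j!\,\sum_{i}\binom{\ell-\alpha}{i}\binom{n-\ell-\alpha+i}{i}\sum_{k}(-1)^{k-i}\binom{\ell-k}{j-k}\binom{\ell-i}{k-i}.
\]

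The heart of the argument, and the one step to flag as the real obstacle, is the inner $k$-sum. Reindexing $m=k-i$ and using the rearrangement $\binom{\ell-i}{m}\binom{\ell-i-m}{j-i-m}=\binom{\ell-i}{j-i}\binom{j-i}{m}$, the sum collapses to
\[
\binom{\ell-i}{j-i}\sum_{m}(-1)^{m}\binom{j-i}{m},
\]
which vanishes unless $j=i$ and equals $1$ in that case. This forces the outer sum to retain only $j=i$, leaving the spectrum as $\sum_i s^{\ell-i}t^{i}\,i!\,\binom{\ell-\alpha}{i}\binom{n-\ell-\alpha+i}{i}$, which is the right-hand form asserted in the proposition.

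Finally, to reconcile this with the left-hand expression $\frac{s^\alpha}{t^{n-\ell-\alpha}(n-\ell-\alpha)!}\,h_{\ell-\alpha,n-\ell-\alpha}(s,t)$, I would expand $h_{\ell-\alpha,n-\ell-\alpha}(s,t)$ using part 3 of Proposition \ref{prop:eply}, pull the prefactor through, and recognize $\frac{(n-\ell-\alpha+i)!}{(n-\ell-\alpha)!\,i!}=\binom{n-\ell-\alpha+i}{i}$, which matches term by term. The bound $0\le\alpha\le\min(\ell,n-\ell)$ is simply the range of valid $\alpha$ for the Johnson eigenvalues, and all binomial coefficients outside this range vanish, so no further bookkeeping is needed.
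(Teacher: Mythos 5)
Your argument is correct, and it follows the same overall skeleton as the paper's proof: start from the diagonalized form $\sum_k h_{\ell-k,k}(s,t)\,\Lambda_k^{n\ell}(\alpha)$, collapse the sum over $k$, and then match the resulting polynomial against $\frac{s^\alpha}{t^{n-\ell-\alpha}(n-\ell-\alpha)!}\,h_{\ell-\alpha,n-\ell-\alpha}(s,t)$ via part 3 of Proposition \ref{prop:eply}. Where you genuinely diverge is in the engine used for the crucial collapse. The paper evaluates
$\sum_k h_{\ell-k,k}(s,t)(-1)^{k-i}\binom{\ell-i}{k-i}$
directly from the integral representation \eqref{eq:hnmm}: after the shift $k=i+m$ the binomial theorem telescopes the integrand to $(s+ty-ty)^{\ell-i}(ty)^i$, and the whole sum evaluates in one stroke to $s^{\ell-i}t^i\,i!$. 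You instead work coefficientwise, using the explicit expansion \eqref{eq:exppoly} and reducing the inner $k$-sum to the trinomial-revision identity $\binom{N}{m}\binom{N-m}{p-m}=\binom{N}{p}\binom{p}{m}$ followed by $\sum_m(-1)^m\binom{p}{m}=\delta_{p,0}$, which forces $j=i$. Both computations are valid and land on the same expression $\sum_i s^{\ell-i}t^i\,i!\,\binom{\ell-\alpha}{i}\binom{n-\ell-\alpha+i}{i}$; your route is more elementary and self-contained (pure binomial identities, no integrals), while the paper's is shorter and exploits the moment-integral machinery it has already built, which is in keeping with how these polynomials are manipulated throughout Section 5. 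The final reconciliation step and the treatment of the multiplicities and the range of $\alpha$ are handled the same way in both.
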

\begin{proof} In the sum over $i$ in equation \eqref{eq:jspec}, only the last two factors involve $k$. We have
\begin{align*}
\sum_kh_{\ell-k,k}(s,t)(-1)^{k-i}\binom{\ell-i}{k-i}&=\sum_k\int_0^\infty(s+ty)^{\ell-k}(ty)^k(-1)^{k-i}\binom{\ell-i}{k-i}\,e^{-y}\,dy\\
\text{setting $k=i+m$}&\\
&=\sum_m\int_0^\infty(s+ty)^{\ell-i-m}(ty)^{i+m}(-1)^{m}\binom{\ell-i}{m}\,e^{-y}\,dy\\
&=\int_0^\infty(s+ty-ty)^{\ell-i}(ty)^{i}\,e^{-y}\,dy\\
&=s^{\ell-i}t^i i!
\end{align*}
using the binomial theorem to sum out $m$. Filling in the additional factors yields
$$\sum_k \,h_{\ell-k,k}(s,t)\,\Lambda_k^{n\ell}(\alpha)=\sum_is^{\ell-i}t^i\, i!\,\binom{\ell-\alpha}{i}\binom{n-\ell-\alpha+i}{i}\ .$$
Taking out a denominator factor of $(n-\ell-\alpha)!$ and multiplying by $s^{-\alpha}t^{n-\ell-\alpha}$ gives
$$\sum_i s^{\ell-\alpha-i}t^{n-\ell-\alpha+i}\binom{\ell-\alpha}{i}(n-\ell-\alpha+i)!$$
which is precisely $h_{\ell-\alpha,n-\ell-\alpha}$ as in the third statement  of Proposition \ref{prop:eply}.
\end{proof}

As in Proposition \ref{prop:hypgeo}, we can express the eigenvalues as follows.
\begin{corollary} \label{cor:spec}
The spectrum of $(sI+tJ)^{\vee\ell}$ consists of the eigenvalues
$$s^\ell\,{}_2 F_0\left(\genfrac{}{}{0pt}{}{\alpha-\ell ,1+n-\ell-\alpha}{\text{---}}\biggm| -\frac{t}{s} \right)$$
for $0\le \alpha\le \min(\ell,n-\ell)$, with corresponding multiplicities as indicated above.
\end{corollary}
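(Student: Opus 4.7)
The plan is to derive the corollary as a direct restatement of Proposition \ref{prop:spec} by feeding the exponential moment polynomial $h_{\ell-\alpha,n-\ell-\alpha}(s,t)$ through the hypergeometric identity established in Proposition \ref{prop:hypgeo}. The multiplicities $\binom{n}{\alpha}-\binom{n}{\alpha-1}$ are already provided by Proposition \ref{prop:spec}, so nothing further is needed on that score.

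First I would recall the general identity from Proposition \ref{prop:hypgeo},
$$h_{N-M,M}(x,t)=x^{N-M}t^M\,M!\,{}_2F_0\left(\genfrac{}{}{0pt}{}{M-N,1+M}{\text{---}}\biggm|-\frac{t}{x}\right),$$
and match parameters to the polynomial appearing in Proposition \ref{prop:spec} by setting $M=n-\ell-\alpha$ and $N-M=\ell-\alpha$, which forces $N=n-2\alpha$ and hence $M-N=\alpha-\ell$. Substituting these values yields
$$h_{\ell-\alpha,n-\ell-\alpha}(s,t)=s^{\ell-\alpha}t^{n-\ell-\alpha}(n-\ell-\alpha)!\,{}_2F_0\left(\genfrac{}{}{0pt}{}{\alpha-\ell,1+n-\ell-\alpha}{\text{---}}\biggm|-\frac{t}{s}\right).$$

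Next I would multiply both sides by the prefactor $s^\alpha/(t^{n-\ell-\alpha}(n-\ell-\alpha)!)$ that appears in Proposition \ref{prop:spec}. The factorial cancels, the $t$-powers cancel, and the $s$-powers combine to give $s^\alpha\cdot s^{\ell-\alpha}=s^\ell$, producing exactly the expression claimed in the corollary. Attaching the multiplicities from Proposition \ref{prop:spec} completes the statement.

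There is essentially no analytic obstacle here: the entire argument is a bookkeeping check that the parameters in the hypergeometric identity line up correctly with the indices of $h_{\ell-\alpha,n-\ell-\alpha}$. The only thing to be careful about is the distinction between the symbol $n$ in the generic identity of Proposition \ref{prop:hypgeo} and the fixed dimension $n$ of the ambient space, and ensuring that the restriction $0\le\alpha\le\min(\ell,n-\ell)$ is compatible with the parameters (the upper index $\alpha-\ell$ is a nonpositive integer so the ${}_2F_0$ series terminates as a polynomial in $t/s$, which is the only reason the hypergeometric expression is meaningful).
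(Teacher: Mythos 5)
Your proposal is correct and matches the paper's (implicit) argument exactly: the paper presents this corollary as an immediate consequence of Proposition \ref{prop:spec} rewritten via the hypergeometric form of Proposition \ref{prop:hypgeo}, which is precisely the parameter-matching and cancellation you carry out. Your bookkeeping (the factorial and $t$-powers cancel, $s^\alpha\cdot s^{\ell-\alpha}=s^\ell$) is accurate, so nothing further is needed.
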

\begin{subsection}{Row-sums and trace identity}
For the row-sums, we know that the all-ones vector is a common eigenvector of the Johnson basis corresponding to $\alpha=0$. These
are the valencies $\Lambda_k(0)$. For the Johnson scheme, we have
$$\Lambda_k^{n\ell}(0)=\binom{\ell}{k}\binom{n-\ell}{k}$$
e.g., see \cite[p.\,219]{BI}, which can be checked directly from the formula for $\Lambda_k^{n\ell}(\alpha)$, equation \eqref{eq:jspec}, with $\alpha$ set to zero. 
Setting $\alpha=0$ in Proposition \ref{prop:spec} gives 
\begin{equation}\label{eq:rowsums}
\frac{1}{t^{n-\ell}\,(n-\ell)!}\,h_{\ell,n-\ell}(s,t)=\sum_i\binom{\ell}{i}\binom{n-\ell+i}{i}\,i!\, s^{\ell-i}t^i
\end{equation}
for the row-sums of $(sI+tJ)^{\vee\ell}$ .
\begin{subsubsection}{Trace Identity}
Terms on the diagonal are the coefficient of $\JS{0}{n\ell}$, which is the identity matrix. So the trace is 
$$\tr (sI+tJ)^{\vee\ell}=\binom{n}{\ell}\,h_{\ell,0}(s,t) =\binom{n}{\ell}\sum_k\binom{\ell}{k}k!\,s^{\ell-k}t^k$$
Cancelling factorials and reversing the order of summation on $k$ yields the formula:
\begin{proposition}\label{prop:trace}
$$\tr (sI+tJ)^{\vee\ell}=\frac{n!}{(n-\ell)!}\,\sum_{0\le k\le \ell}\frac{s^kt^{\ell-k}}{k!}$$
\end{proposition}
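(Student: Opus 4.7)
The plan is to read off the diagonal coefficient in the expansion from Theorem \ref{thm:xnl}. Since $\JS{0}{n\ell}$ is the identity matrix and every other $\JS{k}{n\ell}$ with $k\ge 1$ has zeros on the diagonal (Johnson distance $k>0$ means $\I\neq\J$), the only contribution to the trace comes from the $k=0$ term. Thus
$$\tr(sI+tJ)^{\vee\ell} = h_{\ell,0}(s,t)\cdot \tr \JS{0}{n\ell} = \binom{n}{\ell}\, h_{\ell,0}(s,t),$$
since $\JS{0}{n\ell}$ is the $\binom{n}{\ell}\times\binom{n}{\ell}$ identity.

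Next, I would invoke part 3 of Proposition \ref{prop:eply} with $m=0$, which gives
$$h_{\ell,0}(s,t)=\sum_{k=0}^\ell \binom{\ell}{k}\,k!\,s^{\ell-k}t^k = \sum_{k=0}^\ell \frac{\ell!}{(\ell-k)!}\,s^{\ell-k}t^k.$$

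Multiplying by $\binom{n}{\ell}$ and using $\binom{n}{\ell}\ell! = n!/(n-\ell)!$ yields
$$\tr(sI+tJ)^{\vee\ell} = \frac{n!}{(n-\ell)!}\sum_{k=0}^\ell \frac{s^{\ell-k}t^k}{(\ell-k)!}.$$

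Finally, reversing the order of summation via the substitution $k\mapsto \ell-k$ gives the stated formula. There is no real obstacle here; the only point requiring care is the observation that $\JS{k}{n\ell}$ vanishes on the diagonal for $k\ge 1$, which lets one isolate the single diagonal coefficient $h_{\ell,0}(s,t)$ from the expansion in Theorem \ref{thm:xnl}.
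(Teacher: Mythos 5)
Your proposal is correct and follows essentially the same route as the paper: both isolate the $k=0$ coefficient $h_{\ell,0}(s,t)$ of the identity matrix $\JS{0}{n\ell}$ in the expansion from Theorem \ref{thm:xnl}, multiply by $\tr\JS{0}{n\ell}=\binom{n}{\ell}$, apply the explicit form of $h_{\ell,0}$, cancel factorials, and reverse the order of summation. The computation checks out and nothing is missing.
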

Now, Proposition \ref{prop:spec} gives the trace
$$
\tr (sI+tJ)^{\vee\ell}=\sum_{0\le\alpha\le\min(\ell,n-\ell)}\left[\binom{n}{\alpha}-\binom{n}{\alpha-1}\right]\,
\sum_i s^{\ell-i}t^i\,\binom{\ell-\alpha}{i}\binom{n-\ell-\alpha+i}{i}\,i!$$
Equating the above expressions for the trace yields the identity
\begin{align*}
\sum_{0\le\alpha\le\min(\ell,n-\ell)}\left[\binom{n}{\alpha}-\binom{n}{\alpha-1}\right]\,
&\sum_i s^{\ell-i}t^i\,\binom{\ell-\alpha}{i}\binom{n-\ell-\alpha+i}{i}\,i!\\ 
&=\frac{n!}{(n-\ell)!}\,\sum_{0\le j\le \ell}\frac{s^j t^{\ell-j}}{j!}
\end{align*}

\end{subsubsection}
\end{subsection}
\hfill \bigskip

\begin{example} For $n=4$, $\ell=2$ we have
 $$\left[ \begin {array}{cccccc} 
s^2+2st+2t^2&st+2t^2&st+2t^2&st+2t^2&st+2t^2&2t^2\\
st+2t^2&s^2+2st+2t^2&st+2t^2&st+2t^2&2t^2&st+2t^2\\
st+2t^2&st+2t^2&s^2+2st+2t^2&2t^2&st+2t^2&st+2t^2\\
st+2t^2&st+2t^2&2t^2&s^2+2st+2t^2&st+2t^2&st+2t^2\\ 
st+2t^2&2t^2&st+2t^2&st+2t^2&s^2+2st+2t^2&st+2t^2\\
2t^2&st+2t^2&st+2t^2&st+2t^2&st+2t^2&s^2+2st+2t^2
\end {array} \right] $$
One can check that the entries are in agreement with Theorem \ref{thm:xnl}. The trace is
$6s^2+12st+12t^2$. The spectrum is
\begin{align*}
&\text{eigenvalue } s^2+6st+12t^2, &\text{ with multiplicity } 1&\\
&\text{eigenvalue } s^2+2st, &\text{ with multiplicity } 3&\\
&\text{eigenvalue } s^2, &\text{ with multiplicity } 2&
\end{align*}
and the trace can be verified from these as well.
\end{example}

\begin{remark}
What is interesting is that these matrices have polynomial entries with all eigenvalues polynomials as well and furthermore,
the exact same set of polynomials produces the eigenvalues as well as the entries. Specializing $s$ and $t$ to integers, a similar
statement holds. All of these matrices will have integer entries with integer eigenvalues all of which belong to closely related families of numbers.
We will examine interesting cases of this phenomenon later on in this paper.
\end{remark}

\section{Permanents from $sI+tJ$}\label{S:gg}
Here we present a proof via recursion of the subpermanents of $sI+tJ$, thereby recovering 
Theorem \ref{thm:xnl} from a different perspective. \bigskip

\begin{remark} For the remainder of this paper, we will work with an $n\times n$ matrix
corresponding to an $\ell\times\ell$ submatrix of the above discussion. Here we have
blown up the submatrix to full size as the object of consideration.
\end{remark}

Let $M_{n,\ell}$ denote the $n\times n$ matrix with $n-\ell$ entries equal to $s+t$ on the main diagonal, 
and $t$'s elsewhere. 
Note that $M_{n,0}=sI+tJ$ and $M_{n,n}=tJ$, where $I$ and $J$ are $n\times n$. 
Define 
\begin{equation}\label{eq:perM}
P_{n,\ell}=\per (M_{n,\ell})
\end{equation}
 to be the permanent of $M_{n,\ell}$.
\smallskip

For $\ell=0$, define
$P_{0,0}=1$, and, recalling equation \eqref{eq:perst},
\begin{equation}\label{eq:init}
P_{n,0}=\per (sI+tJ)=\sum_{j=0}^n\frac{n!}{j!}s^jt^{n-j}=\sum_{j=0}^n\frac{n!}{(n-j)!}s^{n-j}t^j\ .
\end{equation}
We have also $P_{n,n}=\per (tJ)=n!\,t^n$ for $J$ of order $n\times n$. These agree at
$P_{0,0}=1$.

\begin{theorem}\label{T:Pnlrecurrence}
For $n\ge1$, $1\le \ell\le n$, we have the recurrence
\begin{equation}\label{E:Pnlrecurrence}
P_{n,\ell}=P_{n,\ell-1}-sP_{n-1,\ell-1}.
\end{equation}
\end{theorem}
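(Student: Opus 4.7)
The plan is to exploit the fact that $M_{n,\ell}$ and $M_{n,\ell-1}$ differ in exactly one diagonal entry, together with the multilinearity of the permanent in each row. Fix a convention that puts the $n-\ell$ entries $s+t$ in the first $n-\ell$ diagonal positions (the particular choice does not matter, since permuting rows and their matching columns leaves the permanent invariant). Then $M_{n,\ell-1}$ and $M_{n,\ell}$ agree in every row except row $n-\ell+1$: in $M_{n,\ell-1}$ this row is $(t,\ldots,t,\,s+t,\,t,\ldots,t)$ with the $s+t$ in position $n-\ell+1$, while in $M_{n,\ell}$ the same row is $(t,\ldots,t,\,t,\,t,\ldots,t)$.

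First I would split the row of $M_{n,\ell-1}$ at index $n-\ell+1$ as the sum
$$(t,\ldots,t,\,t,\,t,\ldots,t) \;+\; (0,\ldots,0,\,s,\,0,\ldots,0).$$
By multilinearity of the permanent,
$$P_{n,\ell-1} \;=\; \per(A_1) \;+\; \per(A_2),$$
where $A_1$ is $M_{n,\ell-1}$ with its row $n-\ell+1$ replaced by all $t$'s, and $A_2$ is $M_{n,\ell-1}$ with its row $n-\ell+1$ replaced by the vector having $s$ in position $n-\ell+1$ and zeros elsewhere.

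Next I would identify each piece. The matrix $A_1$ is exactly $M_{n,\ell}$, since changing the diagonal entry in position $n-\ell+1$ from $s+t$ to $t$ (the off-diagonal entries already being $t$) turns that row into the all-$t$'s row characteristic of $M_{n,\ell}$, while all other rows are unchanged. For $A_2$, the row $n-\ell+1$ has a single nonzero entry $s$ at column $n-\ell+1$, so the permanent equals $s$ times the permanent of the $(n-1)\times(n-1)$ minor obtained by deleting row and column $n-\ell+1$. That minor has $t$'s off the diagonal, $s+t$'s at positions $1,\ldots,n-\ell$ of the diagonal, and $t$'s at the remaining $\ell-1$ diagonal positions — which is precisely $M_{n-1,\ell-1}$. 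Hence $\per(A_2) = s\,P_{n-1,\ell-1}$, and rearranging yields $P_{n,\ell} = P_{n,\ell-1} - s\,P_{n-1,\ell-1}$.

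There is no real obstacle here; the only small point requiring care is bookkeeping the indices so that $A_1$ genuinely coincides with $M_{n,\ell}$ (not some relabelling thereof) and that the minor for $A_2$ is genuinely $M_{n-1,\ell-1}$ with the correct number of $s+t$'s on its diagonal. Once the convention that the $s+t$'s occupy the first block of diagonal positions is fixed, both identifications are immediate.
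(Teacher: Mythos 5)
Your proof is correct and rests on the same underlying idea as the paper's: decomposing the permanent with respect to the one row in which $M_{n,\ell-1}$ and $M_{n,\ell}$ differ, identifying one piece as $P_{n,\ell}$ and the other as $s\,P_{n-1,\ell-1}$. The paper organizes this as a Laplace-type expansion along the first row followed by a regrouping of terms, whereas your direct appeal to multilinearity in that row is a slightly cleaner packaging of the same computation.
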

\begin{proof}
We have $0\le \ell\le n$ so $n-(\ell-1)=n-\ell+1\ge1$, i.e.,
the matrix $M_{n,\ell-1}$ contains at least $1$ entry on its main diagonal equal to $s+t$. 
Write the block form
$$M_{n, \ell-1}=\begin{bmatrix}s+t&A\\A^T&M_{n-1,\ell-1} \end{bmatrix}$$
with $A=[t,t,\ldots t]$ the $1\times (n-1)$ row vector of all $t$'s, and $A^T$ its transpose.
Now compute the permanent of $M_{n,\ell-1}$ expanding along the first row. We get
\begin{equation}\label{E:PPP}
P_{n, \ell-1}=\per(M_{n, \ell-1})=(s+t)\,\per(M_{n-1,\ell-1})+F(A,A^T,M_{n-1,\ell-1})
\end{equation}
where 
$F(A,A^T,M_{n-1,\ell-1})$ is the contribution to $P_{n,\ell-1}$ involving $A$. Now 
\begin{align*}
t\,\per(M_{n-1,\ell-1})+F(A, & A^T, M_{n-1,\ell-1})\\
&=\per\big(\begin{bmatrix}t&A\\A^T&M_{n-1,\ell-1}\end{bmatrix}\big)\\
&=\per\big(\begin{bmatrix}A^T&M_{n-1,\ell-1}\\t&A\end{bmatrix}\big)\\
&=\per\big(\begin{bmatrix}M_{n-1,\ell-1}&A^T\\A&t\end{bmatrix}\big)\\
&=P_{n,\ell}\ .
\end{align*}

Thus from equation \eqref{E:PPP}:
\begin{align*}
P_{n, \ell-1}&=s\,\per(M_{n-1,\ell-1})
+t\,\per(M_{n-1,\ell-1})
+F(A,A^T,M_{n-1,\ell-1})\\
&=s\,P_{n-1,\ell-1}+P_{n,\ell}
\end{align*}
and so the result. \flushright\qedhere
\end{proof}

We arrange the polynomials $P_{n,\ell}$ in a triangle, with the columns labelled by $\ell\ge0$ and rows by $n\ge0$, starting with $P_{0,0}=1$ at the top
vertex:
$$\begin{array}{ccccc}
P_{0,0}&&&&\\
P_{1,0}&P_{1,1}&&&\\
P_{2,0}&P_{2,1}&P_{2,2}&&\\
\vdots&&\ddots&\ddots& \\
P_{n-1,0}&\multicolumn{2}{c}{\hdots}&P_{n-1,n-2}&P_{n-1,n-1}\\
P_{n,0}&P_{n,1}&\hdots&P_{n,n-1}&P_{n,n}
\end{array}
$$
The recurrence says that to get the $n,\ell$ entry, you combine elements in column $\ell-1$ in rows $n$ and $n-1$, forming an {\large\tt L}-shape. 
Thus, given the first column $\{P_{n,0}\}_{n\ge0}$, the table can be generated in full. \bigskip

Now we check that these are indeed our exponential moment polynomials.
Additionally we derive an expression for $P_{n,\ell}$ in terms of the initial sequence $P_{n,0}$. For clarity, we will explicitly denote the dependence
of $P_{n,\ell}$ on $(s,t)$.

\begin{theorem}\label{thm:Pst=}
For $\ell\ge 0$ we have \bigskip

1.  The permanent of the $n\times n$ matrix with $n-\ell$ entries on the diagonal equal to $s+t$ and all other entries equal to $t$ is
\begin{equation}\label{E:Pst1}
P_{n,\ell}(s,t)=h_{n-\ell,\ell}(s,t)=\sum_{j=\ell}^n\binom{n-\ell}{n-j}j!\,s^{n-j}t^j\ .
\end{equation}

\hfill \bigskip

2. \begin{equation}\label{E:Pst2}
P_{n,\ell}(s,t)=\sum_{j=0}^{\ell}\binom{\ell}{j}{(-1)^j}s^jP_{n-j,0}(s,t)\ .
\end{equation} \bigskip

3. And the complementary sum: $$s^n=\sum_{j=0}^n\binom{n}{\ell} (-1)^\ell\,P_{n,\ell}(s,t)\ .$$
\end{theorem}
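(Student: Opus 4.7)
I treat the three statements in order. Parts 1 and 2 will both fall to induction on $\ell$ using the recurrence \eqref{E:Pnlrecurrence}; part 3 is then an algebraic consequence of part 2.

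For part 1, the base case $\ell = 0$ matches \eqref{eq:init} once one expands $h_{n,0}(s,t) = \int_0^\infty (s+ty)^n\,e^{-y}\,dy$ by the binomial theorem. For the inductive step, the integral representation \eqref{eq:hnmm} and the factorization $(s+ty)^{n-\ell+1} - s(s+ty)^{n-\ell} = (s+ty)^{n-\ell}\cdot ty$ give
\[
h_{n-\ell+1,\ell-1}(s,t) - s\,h_{n-\ell,\ell-1}(s,t) = \int_0^\infty (s+ty)^{n-\ell}(ty)^{\ell}\,e^{-y}\,dy = h_{n-\ell,\ell}(s,t).
\]
Combined with Theorem \ref{T:Pnlrecurrence} and the inductive hypothesis, this identifies $P_{n,\ell}$ with $h_{n-\ell,\ell}(s,t)$; the explicit finite sum then follows from \eqref{eq:exppoly1}.

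For part 2, I again induct on $\ell$, with the trivial base case $\ell = 0$. The recurrence gives
\[
P_{n,\ell} = \sum_{j=0}^{\ell-1}\binom{\ell-1}{j}(-1)^j s^j P_{n-j,0} \;-\; s\sum_{j=0}^{\ell-1}\binom{\ell-1}{j}(-1)^j s^j P_{n-1-j,0};
\]
shifting $j \mapsto j-1$ in the second sum and applying Pascal's rule $\binom{\ell-1}{j}+\binom{\ell-1}{j-1} = \binom{\ell}{j}$ recombines the two sums into the stated form.

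For part 3 (reading the summation index as $\ell$ in the stated formula), substitute \eqref{E:Pst2} and interchange summation:
\[
\sum_{\ell=0}^n \binom{n}{\ell}(-1)^\ell P_{n,\ell}(s,t) = \sum_{j=0}^n (-s)^j\,P_{n-j,0}(s,t)\sum_{\ell=j}^n \binom{n}{\ell}\binom{\ell}{j}(-1)^\ell.
\]
The trinomial identity $\binom{n}{\ell}\binom{\ell}{j} = \binom{n}{j}\binom{n-j}{\ell-j}$ reduces the inner sum to $\binom{n}{j}(-1)^j(1-1)^{n-j}$, which vanishes unless $j = n$. The surviving term is $(-s)^n P_{0,0}(s,t)(-1)^n = s^n$.

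No step should present a genuine obstacle: Theorem \ref{T:Pnlrecurrence} supplies the essential structural input, and the rest is routine manipulation with the integral representation of $h$, Pascal's rule, and the standard vanishing of alternating binomial sums. The only point demanding attention is keeping the boundary terms $\binom{\ell-1}{\ell} = 0 = \binom{\ell-1}{-1}$ correctly accounted for in the index shift in part 2.
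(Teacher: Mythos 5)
Your proposal is correct. Part 1 is essentially the paper's own argument: both verify that $h_{n-\ell,\ell}$ satisfies the recurrence of Theorem \ref{T:Pnlrecurrence} via the factorization $(s+ty)^{n-\ell+1}=(s+ty)\cdot(s+ty)^{n-\ell}$ under the integral sign, anchored at $\ell=0$ by \eqref{eq:init} (you make the induction on $\ell$ explicit, which the paper leaves implicit). Parts 2 and 3, however, take a genuinely different route. The paper proves both by a single device: insert the integral representation \eqref{eq:hnmm} and apply the binomial theorem inside the integral, so that part 2 becomes $\int_0^\infty (s+ty)^{n-\ell}\bigl((s+ty)-s\bigr)^\ell e^{-y}\,dy=h_{n-\ell,\ell}$ and part 3 becomes $\int_0^\infty\bigl((s+ty)-ty\bigr)^n e^{-y}\,dy=s^n$. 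You instead prove part 2 by induction on $\ell$ from the recurrence together with Pascal's rule (correct, including the boundary terms $\binom{\ell-1}{\ell}=\binom{\ell-1}{-1}=0$), and then derive part 3 purely formally from part 2 via the trinomial identity $\binom{n}{\ell}\binom{\ell}{j}=\binom{n}{j}\binom{n-j}{\ell-j}$ and the vanishing of the alternating sum $(1-1)^{n-j}$ for $j<n$. The paper's method is shorter and more unified, exploiting the exponential-moment integral that organizes the whole section; yours avoids the integral representation entirely in parts 2 and 3, stays at the level of the recurrence, and exhibits part 3 as a binomial-transform inversion of part 2 --- a self-contained, purely combinatorial verification. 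You also correctly read the summation index in part 3 as $\ell$, silently fixing a typo in the statement. Both arguments are sound.
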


\begin{proof}
The initial sequence $P_{n,0}=h_{n,0}$ as noted in equation \eqref{eq:init}. 
We check that $h_{n-\ell,\ell}$ satisfies recurrence \eqref{E:Pnlrecurrence}. Starting from the integral representation for $h_{n-\ell+1,\ell-1}$, 
equation \eqref{eq:hnm}, we have
\begin{align*}
h_{n-\ell+1,\ell-1}&=\int_0^\infty (s+ty)^{n-\ell+1}(ty)^{\ell-1}\,e^{-y}\,dy \\
&=\int_0^\infty (s+ty)(s+ty)^{n-\ell}(ty)^{\ell-1}\,e^{-y}\,dy \\
&=s\,h_{n-\ell,\ell-1}+h_{n-\ell,\ell}\\
\end{align*}
as required, where we now identify $h_{n-\ell+1,\ell-1}=P_{n,\ell-1}$, 
$h_{n-\ell,\ell-1}=P_{n-1,\ell-1}$, and $h_{n-\ell,\ell}=P_{n,\ell}$. And equation \eqref{eq:exppoly1}
gives an explicit form for $P_{n,\ell}$.\bigskip

For \#2, starting with the integral representation for $P_{n,0}=h_{n,0}$, we get
\begin{align*}
\sum_{j=0}^{\ell}\binom{\ell}{j}{(-1)^j}s^j\,\int_0^\infty (s+ty)^{n-j}\,e^{-y}\,dy
&=\sum_{j=0}^{\ell}\binom{\ell}{j}{(-1)^j}s^j\,\int_0^\infty (s+ty)^{n-\ell}(s+ty)^{\ell-j}\,e^{-y}\,dy\\
&=\int_0^\infty (s+ty)^{n-\ell}(s+ty-s)^\ell\,e^{-y}\,dy\\
&=h_{n-\ell,\ell}
\end{align*}
as required. The proof for \#3 is similar, using equation \eqref{eq:hnmm},
$$P_{n,\ell}=h_{n-\ell,\ell}=\int_0^\infty (s+ty)^{n-\ell}(ty)^\ell\,e^{-y}\,dy$$
and the binomial theorem for the sum.
\end{proof}

\begin{subsection}{$(sI+tJ)^{\vee\ell}$ revisited}
Now we have an alternative proof of Theorem \ref{thm:xnl}.
\begin{lemma} \label{L:more}
Let $\I$ and $\J$ be $\ell$-subsets of $[n]$ with $\text{dist}_{\rm JS}(\I,\J)=k$.
Then
\begin{equation*}
\per(sI+tJ)_{{\rm IJ}}=P_{\ell, k}(s,t).
\end{equation*}
\end{lemma}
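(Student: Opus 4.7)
The plan is to identify the submatrix $(sI+tJ)_{\I\J}$, up to independent row and column permutations, with the matrix $M_{\ell,k}$ defined in \eqref{eq:perM}, and then invoke the fact that the permanent is invariant under such permutations.

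First I would spell out the entries of the submatrix. Write $\I=(i_1,\ldots,i_\ell)$ and $\J=(j_1,\ldots,j_\ell)$ in the usual ordering. For any $p,q\in\{1,\ldots,\ell\}$, the $(p,q)$ entry of $(sI+tJ)_{\I\J}$ equals $s+t$ when $i_p=j_q$ and equals $t$ otherwise. Thus the $s+t$ entries of the submatrix sit exactly in those positions where the row label and column label coincide, i.e.\ one position for each element of $\I\cap\J$; since $\text{dist}_{\rm JS}(\I,\J)=k$, there are $|\I\cap\J|=\ell-k$ such positions.

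Next I would reorder: permute the rows of $(sI+tJ)_{\I\J}$ so that the rows indexed by elements of $\I\cap\J$ come first (in some fixed order), then permute the columns so that the columns indexed by elements of $\I\cap\J$ come first in the same order. Because the permanent is invariant under independent row and column permutations, this does not change its value. In the reordered matrix, the first $\ell-k$ diagonal entries are of the form $(\text{row}=a,\text{col}=a)$ with $a\in\I\cap\J$, so they equal $s+t$. Any remaining position corresponds to a row label in $\I$ and a column label in $\J$ that cannot coincide (either both sit in the rearranged top block but with different labels, or at least one lies in the disjoint pair $\I\setminus\J$ and $\J\setminus\I$), so its entry is $t$. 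In particular, the last $k$ diagonal entries are $t$.

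The resulting matrix is therefore exactly $M_{\ell,k}$: an $\ell\times\ell$ matrix with $\ell-k$ entries equal to $s+t$ on the main diagonal and every other entry equal to $t$. Hence
\[
\per(sI+tJ)_{\I\J}=\per M_{\ell,k}=P_{\ell,k}(s,t),
\]
as claimed. There is no real obstacle here; the only point worth stating carefully is that independent row and column permutations preserve the permanent, which allows us to reduce an arbitrary submatrix (whose $s+t$ entries sit in a ``partial permutation'' pattern determined by $\I\cap\J$) to the canonical matrix $M_{\ell,k}$ whose permanent was computed earlier.
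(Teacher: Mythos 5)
Your proof is correct and follows essentially the same route as the paper: both identify $(sI+tJ)_{\I\J}$ as permutationally equivalent to $M_{\ell,k}$ (using $|\I\cap\J|=\ell-k$) and then invoke the definition of $P_{\ell,k}(s,t)$ as $\per M_{\ell,k}$. You simply spell out in more detail the location of the $s+t$ entries and the invariance of the permanent under independent row and column permutations, which the paper leaves implicit.
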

\begin{proof}
Now $|{\rm I}\cap{\rm J}|=\ell-k$ so the submatrix 
$(sI+tJ)_{{\rm IJ}}$
is permutationally equivalent to  the $\ell\times\ell$ matrix with
$\ell-k$ entries $s+t$ on its main diagonal and $t$'s elsewhere, i.e., to the 
matrix $M_{\ell, k}$. 
Hence, by definition of $P_{\ell, k}(s,t)$, equation \eqref{eq:perM}, we have the result.
\end{proof}

Thus, the expansion in the Johnson basis:

\begin{equation}\label{E:newproof}
(sI+tJ)^{\vee\ell}=\sum_{k}h_{\ell-k,k}(s,t)\,{\rm JS}^{n\ell}_k
\end{equation}

\begin{proof}
Let {\rm I} and {\rm J} be $\ell$-subsets of $[n]$ with Johnson-distance $k$. 
By definition, the IJ entry of the LHS of equation \eqref{E:newproof}
equals the permanent of the submatrix from rows $\I$ and columns $\J$,
 ${\rm per}(sI+tJ)_{\I\J}=P_{\ell, k}(s,t)=h_{\ell-k,k}(s,t)$, 
by the above Lemma and Theorem \ref{thm:Pst=}, \#1. 
Now on the RHS of equation~(\ref{E:newproof}), if $\text{dist}_{\rm JS}(\I,\J)=k$,
the only nonzero contribution comes from the ${\rm JS}^{n\ell}_k$ term.
This yields $h_{\ell-k,k}(s,t)\times 1=h_{\ell-k,k}(s,t)$ as required.
\end{proof}
\end{subsection} \bigskip

\begin{subsection}{Elementary subgraphs and permanents}
There is an approach to permanents of $sI+tJ$ via elementary subgraphs, based on that of Biggs~\cite{BIGGS} 
for determinants. \medskip

An {\em elementary subgraph\/} (see \cite[p.\,44]{BIGGS}) of a graph 
$G$ is a spanning subgraph of $G$ all of whose components are
$0$, $1$, or $2$-regular, {i.e.}, 
all of whose components are isolated vertices, isolated edges, or cycles of length $j\ge 3$. \bigskip

Let $K_n^{(\ell)}$ be a copy of the complete graph $K_n$ with vertex set $[n]$ in which 
the first 
$n-\ell$ vertices $[n-\ell]=\{1,2,\ldots,n-\ell\}$ are  
 {\em distinguished\/}.  We may now consider the matrix $M_{n,\ell}$ as the {\em weighted\/} adjacency matrix of 
$K_n^{(\ell)}$ in which 
the weights of the distinguished vertices are $s+t$, with
all undistinguished vertices and all edges assigned a weight of $t$. \bigskip

Let $E$ be an elementary subgraph of  $K_n^{(\ell)}$. Then we describe $E$ as having $d(E)$ 
distinguished isolated  vertices and $c(E)$ cycles. 
The weight of $E$, $\text{wt}(E)$, is defined as:
\begin{equation}\label{E:wtE}
\text{wt}(E)=(s+t)^{d(E)}t^{n-d(E)},  
\end{equation}
a  homogeneous polynomial of degree $n$. \bigskip

This leads to an interpretation/derivation of $P_{n,\ell}(s,t)$ as the permanent $\per(M_{n,\ell})$.

\begin{theorem} \label{T:Esubgraph}
We have the expansion in elementary subgraphs
$$
P_{n,\ell}(s,t)=\sum_{E}2^{c(E)}\,{\rm wt}(E)\ .
$$
\end{theorem}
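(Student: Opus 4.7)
The natural approach is to expand the permanent as a sum over permutations and group terms by the elementary subgraph each permutation traces out. Starting from
$$P_{n,\ell}(s,t) = \per(M_{n,\ell}) = \sum_{\sigma \in S_n} \prod_{i=1}^n (M_{n,\ell})_{i,\sigma(i)},$$
the first step is to inspect a single term. Every off-diagonal entry of $M_{n,\ell}$ equals $t$, while the diagonal entry $(M_{n,\ell})_{ii}$ equals $s+t$ when $i \in [n-\ell]$ is distinguished and $t$ otherwise. So the only factors of $s+t$ come from distinguished fixed points of $\sigma$, and the contribution of $\sigma$ equals $(s+t)^{d(\sigma)} t^{\, n-d(\sigma)}$, where $d(\sigma)$ counts the distinguished fixed points of $\sigma$.

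The second step is to assign to each $\sigma$ its underlying subgraph $E_\sigma$ inside $K_n^{(\ell)}$: isolated vertices from fixed points, isolated edges from 2-cycles, and a graph-cycle of length $k$ for every $k$-cycle of $\sigma$ with $k \ge 3$. This $E_\sigma$ is elementary by construction, and one has $d(\sigma) = d(E_\sigma)$, so the contribution of $\sigma$ equals $\text{wt}(E_\sigma)$ as defined in \eqref{E:wtE}.

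The final step is to count, for a fixed elementary subgraph $E$, the size of the fiber $\{\sigma : E_\sigma = E\}$. Isolated vertices force fixed points and isolated edges force the associated transpositions, so neither contributes any choice. Each graph-cycle of length $k \ge 3$, on the other hand, can be realized as a permutation cycle in exactly two ways (the two cyclic orientations), so the fiber has size $2^{c(E)}$. Collecting contributions gives
$$P_{n,\ell}(s,t) = \sum_E \sum_{\sigma :\, E_\sigma = E} \text{wt}(E_\sigma) = \sum_E 2^{c(E)}\,\text{wt}(E),$$
as claimed. No genuine obstacle arises; the only care needed is the bookkeeping check that 2-cycles contribute a factor of $1$ (consistent with $c(E)$ counting only cycles of length $\ge 3$) while $k$-cycles with $k \ge 3$ contribute the factor of $2$ from the two orientations of a $k$-gon.
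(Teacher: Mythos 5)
Your proposal is correct and is essentially the same argument the paper gives: the paper assigns the same component weights and then defers to Biggs's Proposition 7.2 (the permutation-expansion of the determinant over elementary subgraphs, with signs dropped and permanent in place of determinant), which is exactly the fiber-counting over cycle orientations that you carry out explicitly. Your version simply writes out in full the details the paper delegates to the citation.
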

\begin{proof}
Assign weights to the components of $E$ as follows: \medskip

each distinguished isolated  vertex will have weight $s+t$; \par
each undistinguished isolated  vertex will have weight $t$;\par
each isolated  edge will have weight $t^2$; \par
and each $j$-cycle, $j\ge 3$, will have weight $t^j$. \medskip

To obtain wt$(E)$ in agreement with equation~(\ref{E:wtE}) we 
form the product of these weights  
over all components in $E$. 
The proof then follows along the lines of Proposition~7.2 of \cite[p.44]{BIGGS}, 
slightly modified to incorporate isolated vertices and with determinant, `${\det}$',
replaced by permanent, `${\per}$', ignoring the minus signs. 
Effectively, each term in the permanent expansion thus
corresponds to a weighted elementary subgraph $E$ of the weighted $K_n^{(\ell)}$. 
\end{proof}
\bigskip

An example with $n=3$ is on the last page of the Appendix. 
\end{subsection}

\begin{subsection}{Associated polynomials and some asymptotics}
Thinking of $s$ and $t$ as parameters, we define the \textsl{associated polynomials}
$$Q_n(x)=\sum_{\ell=0}^n\binom{n}{\ell}x^\ell P_{n,\ell}\ .$$
As in the proof of \#3 above, using the integral formula \eqref{eq:hnmm}, we have
\begin{align}\label{eq:Qn}
Q_n(x)&=\int_0^\infty(s+ty+xty)^n\,\,e^{-y}\,dy \nonumber\\
&=\sum_j \binom{n}{j}s^j(1+x)^{n-j}t^{n-j}(n-j)!\nonumber\\
&=n!\sum_j\frac{s^j(1+x)^{n-j}t^{n-j}}{j!}\ .
\end{align}
Comparing with equation \eqref{eq:init}, we have
\begin{proposition}
$$Q_n(x)=\sum_{\ell=0}^n\binom{n}{\ell}x^\ell P_{n,\ell}(s,t)=P_{n,0}(s,t+xt)\ .$$
\end{proposition}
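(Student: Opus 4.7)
The proof is essentially a direct comparison: the computation in equation \eqref{eq:Qn} has already done almost all of the work, so the plan is just to recognize the right-hand side in what is already written.

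First I would recall from equation \eqref{eq:init} that
$$P_{n,0}(s,\tau)=\sum_{j=0}^n\frac{n!}{j!}\,s^j\,\tau^{n-j}$$
as a polynomial in its second argument $\tau$. Substituting $\tau=t+xt=t(1+x)$ directly yields
$$P_{n,0}(s,t+xt)=\sum_{j=0}^n\frac{n!}{j!}\,s^j\,(1+x)^{n-j}\,t^{n-j}=n!\sum_j\frac{s^j(1+x)^{n-j}t^{n-j}}{j!}\,.$$

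Next I would point to the third line of \eqref{eq:Qn}, which expresses $Q_n(x)$ as exactly the same sum. Equating the two expressions gives the proposition. It might be worth adding a brief one-line motivation via the integral representation: using $P_{n,0}(s,\tau)=\int_0^\infty(s+\tau y)^n e^{-y}\,dy$ from the definition of $h_{n,0}$, the substitution $\tau=t(1+x)$ gives exactly $\int_0^\infty(s+ty+xty)^n e^{-y}\,dy$, which is the first line of \eqref{eq:Qn}. This makes the identification transparent without requiring the reader to re-expand the binomial.

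There is really no obstacle here; the only thing to watch is that the identity rests on the simple observation that $P_{n,0}$ depends on $t$ only through the combination $t$ itself (no hidden $t$'s in $P_{n,0}$ beyond what is displayed), so substituting $t\mapsto t+xt$ really does commute with the sum. This is clear from either \eqref{eq:init} or the integral formula, so the proof consists of a single display and a sentence of justification.
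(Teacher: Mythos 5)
Your proposal is correct and matches the paper exactly: the paper's entire argument for this proposition is the computation in \eqref{eq:Qn} (obtained from the integral formula \eqref{eq:hnmm} and the binomial theorem) followed by the words ``Comparing with equation \eqref{eq:init}.'' Your identification of $P_{n,0}(s,\tau)$ with $\tau=t(1+x)$, including the remark that the integral representation makes the substitution transparent, is precisely that comparison spelled out.
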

And we have 
\begin{proposition}
As $n \to \infty$, for $x\ne -1$,
$$Q_n(x)\sim t^n(1+x)^n\,n!\,e^{s/(t+tx)}$$
with the special cases
\begin{gather*}
Q_n(-1)=s^n\\ \mathstrut\\
Q_n(0)=P_{n,0}\sim t^nn!\,e^{s/t}\\\mathstrut\\
Q_n(1)=\sum_\ell\binom{n}{\ell}P_{n,\ell}\sim (2t)^nn!\,e^{s/(2t)}
\end{gather*}
\end{proposition}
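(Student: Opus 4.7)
The plan is to work directly from the explicit formula \eqref{eq:Qn},
$$Q_n(x)=n!\sum_{j=0}^n \frac{s^j(1+x)^{n-j}t^{n-j}}{j!},$$
and simply to factor the expected leading behavior out of the sum. For $x\ne -1$, pulling $t^n(1+x)^n$ out of every term gives
$$Q_n(x)=t^n(1+x)^n\,n!\sum_{j=0}^n \frac{1}{j!}\left(\frac{s}{t(1+x)}\right)^j.$$
The remaining finite sum is precisely the $n$-th partial sum of the exponential series for $e^{s/(t(1+x))}$, which converges (for any fixed $s,t$ and $x\ne -1$) as $n\to\infty$. Dividing by $t^n(1+x)^n\,n!$ and passing to the limit yields the asymptotic $Q_n(x)\sim t^n(1+x)^n\,n!\,e^{s/(t+tx)}$, since $t(1+x)=t+tx$.

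The degenerate case $x=-1$ must be handled separately, because the factorization above is invalid there. But substituting $x=-1$ into \eqref{eq:Qn} directly: the factor $(1+x)^{n-j}$ kills every term except the one with $n-j=0$, i.e.\ $j=n$, leaving $Q_n(-1)=n!\cdot s^n/n!=s^n$ exactly. The special values $Q_n(0)\sim t^n n!\,e^{s/t}$ and $Q_n(1)\sim (2t)^n n!\,e^{s/(2t)}$ then follow by direct substitution of $x=0$ and $x=1$ into the general asymptotic.

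There is no real obstacle here: the whole argument reduces to recognizing the truncated exponential series sitting inside \eqref{eq:Qn}. If a quantitative error term were wanted in place of the $\sim$ statement, one could use the standard tail estimate $\sum_{j>n}|z|^j/j!\le |z|^{n+1}e^{|z|}/(n+1)!$ applied at $z=s/(t(1+x))$, which shows the remainder decays super-exponentially in $n$, uniformly on any compact set of $x$ bounded away from $-1$.
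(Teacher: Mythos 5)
Your argument is correct and is essentially identical to the paper's own proof: both factor $t^n(1+x)^n\,n!$ out of the explicit sum \eqref{eq:Qn} and recognize the remaining partial sum of the exponential series. Your explicit treatment of the $x=-1$ case and the tail estimate are nice additions, but the underlying approach is the same.
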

\begin{proof} From equation \eqref{eq:Qn}
\begin{align*}
Q_n(x)&=n!\sum_j\frac{s^j(1+x)^{n-j}t^{n-j}}{j!}\\
&=t^n(1+x)^nn!\,\sum_{j=0}^n \frac{1}{j!}\left(\frac{s/t}{1+x}\right)^j
\end{align*}
from which the result follows.
\end{proof}
\end{subsection}

\section{Generalized derangement numbers}

The formula \eqref{eq:perst} is suggestive of the derangement numbers (see, e.g., \cite[p.\,180]{COM}),
$$d_n=n!\sum_{j=0}^n\frac{(-1)^j}{j!}\ .$$
This leads  to   \bigskip

\begin{definition}     
A family of numbers, depending on $n$ and $\ell$,
 arising as the values of $P_{n,\ell}(s,t)$ when $s$ and $t$ are assigned fixed integer values, are called
\textsl{generalized derangement numbers}.
\end{definition}
\hfill\bigskip

We have seen that the assignment $s=-1,\,t=1$ produces the usual derangement numbers when $\ell=0$.
In this section, we will examine in detail the cases $s=-1\,,t=1$, generalized \textsl{derangements}, and $s=t=1$,
generalized \textsl{arrangements}. \bigskip

\begin{remark}
Topics related to this material are discussed in Riordan, \cite{R}. The article \cite{ST} is of related interest as well.
\end{remark}

\subsection{Generalized derangements of $[n]$}\label{SS:derangements}
To start, define $$D_{n,\ell}=P_{n,\ell}(-1,1)\ .$$ 
Equation \eqref{E:Pst1} and Proposition \ref{prop:hypgeo} give:
\begin{equation}\label{E:Dnell}
D_{n,\ell}=\sum_{j=\ell}^{n}(-1)^{n-j}\binom{n-\ell}{n-j}{j!}=
(-1)^{n-\ell}\,\ell!\,{}_2 F_0\left(\genfrac{}{}{0pt}{}{ \ell-n ,1+\ell}{\text{---}}\biggm| 1 \right)\ .
\end{equation}
Equation \eqref{eq:init} reads
$$\per(J-I)=D_{n,0}=d_n$$ 
the number derangements of $[n]$. So we have
a combinatorial interpretation of $D_{n,0}$.

\begin{subsubsection}{Combinatorial interpretation of $D_{n,\ell}$}
We now give a combinatorial interpretation of $D_{n,\ell}$ for $\ell\ge1$. \bigskip

When $\ell\ge 1$, recurrence (\ref{E:Pnlrecurrence}) for $P_{n,\ell}(-1,1)$  gives:
\begin{equation}\label{E:recD}
D_{n,\ell}=D_{n,\ell-1}+D_{n-1,\ell-1}. 
\end{equation}

We say that a subset $\I$ of $[n]$ is \textsl{deranged} by a permutation if no point of $\I$ is fixed by the permutation.
\begin{proposition}
$D_{n,0}=d_n$, the number of derangements of $[n]$. In general, for $\ell\ge 0$, $D_{n,\ell}$ is the number 
of permutations of $[n]$ in which the set $\{1,2,\ldots,n-\ell\}$ is deranged,
with no restrictions on the $\ell$-set $\{n-\ell+1,\ldots,n\}$.
\end{proposition}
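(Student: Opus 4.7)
The plan is to interpret $D_{n,\ell}=P_{n,\ell}(-1,1)=\per M_{n,\ell}\bigr|_{s=-1,t=1}$ directly as a permanent count, bypassing any algebra. Under the specialization $s=-1,t=1$ we have $s+t=0$, so the matrix $M_{n,\ell}$ becomes a $0/1$ matrix whose only zeros are the $n-\ell$ entries in positions $(i,i)$ for $1\le i\le n-\ell$; every other entry (including the remaining $\ell$ diagonal entries $M_{ii}=t=1$ for $i>n-\ell$) equals $1$. Since the permanent of a $0/1$ matrix counts the permutations $\sigma$ with $\prod_i M_{i,\sigma(i)}=1$, equivalently $M_{i,\sigma(i)}\neq 0$ for every $i$, we see that $D_{n,\ell}$ counts exactly those $\sigma\in S_n$ satisfying $\sigma(i)\neq i$ for $i\in\{1,\ldots,n-\ell\}$, with no restriction on $\sigma(i)$ for $i\in\{n-\ell+1,\ldots,n\}$ (those rows being entirely ones). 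This is the combinatorial description claimed.

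For the base case $\ell=0$, every diagonal entry is zero, so $M_{n,0}=J-I$ and $\per(J-I)=d_n$ is the classical permanent formula for derangements; this is also the $s=-1,t=1$ specialization of equation~\eqref{eq:init}. The extreme case $\ell=n$ gives $M_{n,n}=J$, hence $D_{n,n}=\per J=n!$, in agreement with the interpretation that no constraints are imposed (the empty set is vacuously deranged).

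As a consistency check, one may re-derive the statement inductively from the recurrence \eqref{E:recD}, $D_{n,\ell}=D_{n,\ell-1}+D_{n-1,\ell-1}$. Let $E_{n,\ell}$ denote the asserted combinatorial count. Partition the permutations counted by $E_{n,\ell}$ according to whether $\sigma(n-\ell+1)\neq n-\ell+1$ or $\sigma(n-\ell+1)=n-\ell+1$: the former class is exactly $E_{n,\ell-1}$, and in the latter class the restriction of $\sigma$ to $[n]\setminus\{n-\ell+1\}$ is a permutation of an $(n-1)$-set still required to derange the same $n-\ell$ elements, contributing $E_{n-1,\ell-1}$. Matching with $E_{n,0}=d_n=D_{n,0}$ closes the induction.

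The main obstacle is essentially cosmetic, namely being careful that the $\ell$ diagonal entries labelled $i>n-\ell$ are indeed filled with $t=1$ (not with $s+t=0$) so that the corresponding rows of $M_{n,\ell}$ are the all-ones rows, which is what frees the values $\sigma(i)$ on the last $\ell$ positions; this is the only point where one has to look carefully at the definition of $M_{n,\ell}$ rather than write a calculation.
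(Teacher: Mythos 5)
Your proof is correct, and its main argument is genuinely different from the one in the paper. The paper proves the proposition by showing that the asserted combinatorial count $E_{n,\ell}$ satisfies the recurrence \eqref{E:recD}, partitioning the permutations according to whether the point $n$ is deranged or fixed (the deranged case requiring a relabeling bijection $n\leftrightarrow n-\ell+1$ onto $D_{n,\ell-1}^*$), and then matching initial values. You instead read the count directly off the definition $D_{n,\ell}=\per M_{n,\ell}\bigl|_{s=-1,t=1}$ from \eqref{eq:perM}: the specialization turns $M_{n,\ell}$ into a $0/1$ matrix whose zeros sit exactly at the forbidden positions $(i,i)$, $i\le n-\ell$, so the permanent counts precisely the permutations deranging $\{1,\dots,n-\ell\}$. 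This is the classical ``permanent of a $0/1$ matrix counts permutations avoiding forbidden positions'' argument; it is shorter, needs no induction, and makes the interpretation transparent (your care about which diagonal entries become $0$ versus $1$ is exactly the right point to check, and permutation equivalence makes the choice of which $n-\ell$ diagonal positions carry $s+t$ immaterial). What the paper's route buys is consistency with the rest of Section 6: the recurrence \eqref{E:recD} is the structural tool reused for the arrangement numbers $A_{n,\ell}$, where a one-line permanent reading is not available. Your supplementary inductive check is also valid and in fact uses a cleaner partition than the paper's (on whether $\sigma$ fixes $n-\ell+1$, which identifies the first class with $D_{n,\ell-1}^*$ with no relabeling needed).
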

\begin{proof}
For $\ell\ge 0$ let $D_{n,\ell}^*$ denote the set 
of permutations in the statement of the Proposition.
 Let $E_{n,\ell}=|D_{n,\ell}^*|$. We claim that $E_{n,\ell}=D_{n,\ell}$. \bigskip

The case $\ell=0$ is immediate. We show that $E_{n,\ell}$ satisfies recurrence \eqref{E:recD}. \medskip

 Now let $\ell>0$. Consider a permutation in $D_{n,\ell}^*$. 
The point $n$ is either (1) deranged,  or (2) not deranged ({\em i.e.}, fixed). \bigskip

\noindent(1)\quad If $n$ is deranged, then the $(n-\ell+1)$-set 
$\{1,2,\ldots,n-\ell,n\}$ is deranged.
By switching $n\leftrightarrow n-\ell+1$ in all permutations of $D_{n,\ell}^*$ 
 we obtain a permutation in $D_{n,\ell-1}^*$. 
Conversely, given any permutation of $D_{n,\ell-1}^*$, 
we switch $n\leftrightarrow n-\ell+1$ to obtain a permutation in $D_{n,\ell}^*$ where  
$n$ is deranged. Hence the number of  permutations in $D_{n,\ell}^*$ with  
$n$ deranged equals $E_{n,\ell-1}$. \bigskip

\noindent(2)\quad 
Here $n$ is fixed so if we remove $n$ from any permutation in $D_{n,\ell}^*$ 
we obtain a permutation in $D_{n-1,\ell-1}^*$. 
Conversely, given a  permutation in $D_{n-1,\ell-1}^*$ we may include $n$ as a fixed point 
to obtain a  permutation in $D_{n,\ell}^*$ with $n$ fixed. 
Hence the number of permutations in $D_{n,\ell}^*$ with $n$  fixed equals $E_{n-1,\ell-1}$. \bigskip

Combining the above two paragraphs shows that $E_{n,\ell}$ satisfies recurrence~(\ref{E:recD}). 
\end{proof}
And a quick check.
$$D_{n,n}=n!$$
there being no restrictions at all in the combinatorial interpretation, in agreement with \eqref{E:Dnell} for $\ell=n$.
\end{subsubsection}\bigskip

\begin{example}
When $n=3$ we have $d_3=D_{3,0}=2$ corresponding to the $2$ permutations of $[3]$ in which $\{1,2,3\}$ is moved: 
$231,312$. \medskip

 Then 
$D_{3,1}=3$ corresponding to the $3$ permutations of $[3]$ in which $\{1,2\}$ is moved: 
$213,231,312$.  \medskip

Then 
$D_{3,2}=4$ corresponding to the $4$ permutations of $[3]$ in which $\{1\}$ is moved: 
$213,231,312,321$. \medskip

Finally 
$D_{3,3}=3!=6$ corresponding to the $3$ permutations of $[3]$ in which $\emptyset$ is moved: 
$123,132,213,231,312,321$. 
\end{example}

Reversing the order of summation in equation \eqref{E:Dnell} gives an alternative expression:
\begin{equation}\label{E:Dnell2}
D_{n,\ell}=\sum_{j=0}^{n-\ell} (-1)^j\,\binom{n-\ell}{j}\,(n-j)!\ .
\end{equation}

\begin{remark}
Formulation \eqref{E:Dnell2} may be proved directly by inclusion-exclusion  on permutations fixing given points.
\end{remark}

\begin{example}
\begin{align*}
D_{5,2}&=\sum_{j=0}^{3} (-1)^j\,\binom{3}{j}\,(5-j)!=\binom{3}{0}5!-\binom{3}{1}4!+\binom{3}{2}3!-\binom{3}{3}2!\\&
=120-72+18-2=64\ .
\end{align*}
\end{example}

Now, from \#2 of Theorem \ref{thm:Pst=}, $s=-1$ and $t=1$ we have:
\begin{equation}\label{E:Dasasum}
D_{n,\ell}=\sum_{j=0}^{\ell}\binom{\ell}{j}d_{n-j}\ .
\end{equation}
Here is a combinatorial explanation. To obtain a permutation in $D_{n,\ell}^*$, we first choose $j$ points from 
$\{n-\ell+1,\ldots,n\}$ to be fixed. Then every derangement of the remaining $(n-j)$ points will produce a permutation in $D_{n,\ell}^*$,
and there are $d_{n-j}$ such derangements.  \bigskip

\begin{example}
\begin{align*}
D_{5,2}&=\sum_{j=0}^{2} \binom{2}{j}\,d_{5-j}=
\binom{2}{0}d_{5}+\binom{2}{1}d_{4}+\binom{2}{2}d_{3}\\&=1\times 44+2\times 9+1\times 2=44+18+2=64\ .
\end{align*}
\end{example}

\begin{subsubsection}{Permanents from $J-I$}
Theorem \ref{thm:xnl} specializes to
$$(J-I)^{\vee\ell}=\sum_{k=0}^{\min(\ell,n-\ell)} D_{\ell, k}\, \JS{k}{n\ell}\ .$$
This can be written using the hypergeometric form:
$$(J-I)^{\vee\ell}=\sum_{k=0}^{\min(\ell,n-\ell)} \, 
(-1)^{\ell-k}\,k!\,{}_2 F_0\left(\genfrac{}{}{0pt}{}{ k-\ell ,1+k}{\text{---}}\biggm| 1 \right)\,\JS{k}{n\ell}\ .$$
with spectrum 
\begin{align*}
&\text{eigenvalue } (-1)^\ell\,{}_2 F_0\left(\genfrac{}{}{0pt}{}{ \alpha-\ell ,-\alpha+n-\ell+1}{\text{---}}\biggm| 1 \right) \\
&\text{ occurring with multiplicity } \binom{n}{\alpha}-\binom{n}{\alpha-1}
\end{align*}
by Corollary \ref{cor:spec} and Proposition \ref{prop:spec}. \bigskip

The entries of $(J-I)^{\vee\ell}$ are from the set of numbers $D_{n,\ell}$. For the spectrum, start with $\alpha=0$.
From equation \eqref{E:Dnell}, we have
$$(-1)^\ell\,{}_2 F_0\left(\genfrac{}{}{0pt}{}{ -\ell ,n-\ell+1}{\text{---}}\biggm| 1 \right)=\frac{1}{(n-\ell)!}\,D_{n,n-\ell}$$
As $\alpha$ increases, we see that the spectrum consists of the numbers
$$\frac{(-1)^\alpha}{(n-\ell-\alpha)!}\,D_{n-2\alpha,n-\ell-\alpha}$$
Think of moving in the derangement triangle, as in the Appendix, starting from position $n,n-\ell$, rescaling the values by
the factorial of the column at each step. Then the eigenvalues are found by successive knight's moves, up 2 rows and one column to the left,
with alternating signs.

\begin{example}
For $n=5$, $\ell=3$, we have
$$(J-I)^{\vee 3}= 
 \left[ \begin {array}{cccccccccc} 2&3&3&3&3&4&3&3&4&4
\\\noalign{\medskip}3&2&3&3&4&3&3&4&3&4\\\noalign{\medskip}3&3&2&4&3&3
&4&3&3&4\\\noalign{\medskip}3&3&4&2&3&3&3&4&4&3\\\noalign{\medskip}3&4
&3&3&2&3&4&3&4&3\\\noalign{\medskip}4&3&3&3&3&2&4&4&3&3
\\\noalign{\medskip}3&3&4&3&4&4&2&3&3&3\\\noalign{\medskip}3&4&3&4&3&4
&3&2&3&3\\\noalign{\medskip}4&3&3&4&4&3&3&3&2&3\\\noalign{\medskip}4&4
&4&3&3&3&3&3&3&2\end {array} \right]$$
with characteristic polynomial
$${\lambda}^{5} \left( \lambda-32 \right)  \left( \lambda+3 \right) ^{4}$$

\end{example}

\begin{remark}
Except for $\ell=2$, the coefficients in the expansion of $(J-I)^{\vee\ell}$ in the Johnson basis will be distinct. Thus 
the Johnson basis itself can be read off directly from $(J-I)^{\vee\ell}$. In this sense, the centralizer algebra of the action of
the symmetric group on $\ell$-sets is determined by knowledge of the action of just $J-I$ on $\ell$-sets.
\end{remark}

\end{subsubsection}


\subsection{Generalized arrangements of $[n]$}\label{SS:gga}

Given $[n]$, $0\le j\le n$, a $j${\em-arrangement\/} of $[n]$ is a permutation of a $j$-subset of $[n]$.
The number of $j$-arrangements of $[n]$ is
$$
A(n,j)=\frac{n!}{(n-j)!}\ .
$$
Note that there is a single $0$-arrangement of $[n]$, from the empty set. \bigskip

Define $A_{n,\ell}=P_{n,\ell}(1,1)$.
So, similar to the case for derangements, 
equation \eqref{E:Pst1} gives:
\begin{equation}\label{E:Anell=}
A_{n,\ell}=\sum_{j=\ell}^{n}\binom{n-\ell}{n-j}{j!}=\ell!\,{}_2 F_0\left(\genfrac{}{}{0pt}{}{ \ell-n ,1+\ell}{\text{---}}\biggm| -1 \right)\ .
\end{equation}
Now define
$a_n=A_{n,0}$ so $$a_n=\per(I+J)=\sum_{j=0}^{n}\frac{n!}{(n-j)!}=\sum_{j=0}^{n}A(n,j)$$
 is the {\em total} number of $j$-arrangements of $[n]$ for $j=0,1,\ldots,n$. 
Thus we have a combinatorial interpretation of $A_{n,0}$.

\begin{subsubsection}{Combinatorial interpretation of $A_{n,\ell}$}
We now give a combinatorial interpretation of $A_{n,\ell}$ for $\ell\ge 1$. \bigskip

When $\ell\ge 1$, recurrence \eqref{E:Pnlrecurrence} for $P_{n,\ell}(1,1)$ gives: 
\begin{equation}\label{E:recA}
A_{n,\ell}=A_{n,\ell-1}-A_{n-1,\ell-1}. 
\end{equation}

\begin{proposition}
$A_{n,0}=a_n$, the total number of arrangements of $[n]$. In general,  for $\ell\ge 0$, 
$A_{n,\ell}$ is  the number of arrangements of $[n]$ which contain $\{1,2,\ldots,\ell\}$. 
\end{proposition}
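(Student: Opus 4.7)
The plan is to parallel the combinatorial argument just given for $D_{n,\ell}$. I would define $A_{n,\ell}^*$ to be the set of arrangements of $[n]$ containing $\{1,2,\ldots,\ell\}$ and put $E_{n,\ell}=|A_{n,\ell}^*|$. The goal is then to show $E_{n,\ell}=A_{n,\ell}$ by induction on $\ell$, using recurrence~\eqref{E:recA}.

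The base case $\ell=0$ is immediate: every arrangement contains the empty set, so $E_{n,0}=a_n=A_{n,0}$. For the inductive step, I would verify that $E_{n,\ell}$ satisfies \eqref{E:recA}, rewritten in the more natural form
$$E_{n,\ell-1}=E_{n,\ell}+E_{n-1,\ell-1}.$$
This should follow by partitioning $A_{n,\ell-1}^*$ according to whether the arrangement uses the symbol $\ell$. Arrangements of $[n]$ that already contain $\{1,\ldots,\ell-1\}$ and also contain $\ell$ are precisely the elements of $A_{n,\ell}^*$, contributing $E_{n,\ell}$. Those that omit $\ell$ are arrangements of $[n]\setminus\{\ell\}$ containing $\{1,\ldots,\ell-1\}$, and the order-preserving bijection $[n]\setminus\{\ell\}\to[n-1]$ (fix $i$ for $i<\ell$, send $i$ to $i-1$ for $i>\ell$) identifies these with elements of $A_{n-1,\ell-1}^*$, contributing $E_{n-1,\ell-1}$.

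As a sanity check and an alternative route, one can count $A_{n,\ell}^*$ directly. An arrangement in $A_{n,\ell}^*$ of length $j$ is obtained by choosing $\ell$ of the $j$ positions for the symbols $1,\ldots,\ell$ and ordering them there ($j!/(j-\ell)!$ ways), then filling the remaining $j-\ell$ positions from $[n]\setminus\{1,\ldots,\ell\}$ in $(n-\ell)!/(n-j)!$ ways. Summing over $\ell\le j\le n$ yields
$$\sum_{j=\ell}^{n}\binom{n-\ell}{n-j}\,j!,$$
which is exactly $A_{n,\ell}$ by equation~\eqref{E:Anell=}. This gives an independent confirmation of the combinatorial interpretation and a direct derivation that avoids the induction altogether.

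I do not expect a serious obstacle: the whole argument mirrors the derangement case in Section~\ref{SS:derangements}. The only point to handle carefully is the minus sign in the recurrence~\eqref{E:recA}, but this is automatic once one sees that the split on whether $\ell$ is used expresses $E_{n,\ell-1}$ (not $E_{n,\ell}$) as a sum, so that $E_{n,\ell}$ naturally appears as a difference. One small bookkeeping detail to state clearly is that the empty arrangement belongs to $A_{n,0}^*$, ensuring the base case matches $a_n$ as defined.
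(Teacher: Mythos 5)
Your argument is correct and is essentially the paper's own proof: the same partition of $A_{n,\ell-1}^*$ according to whether the symbol $\ell$ appears, with the same order-preserving relabelling onto $[n-1]$ (the paper phrases it as subtracting $1$ from all parts $\ge\ell+1$) yielding the recurrence $E_{n,\ell-1}=E_{n,\ell}+E_{n-1,\ell-1}$. Your supplementary direct count is also present in the paper, as the combinatorial explanation of equation~\eqref{eq:anl}, so nothing here departs from the published treatment.
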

\begin{proof}
For $\ell\ge 0$, let $A_{n,\ell}^*$ denote the set of arrangements of $[n]$ which contain $[\ell]$.
With $[0]=\emptyset$, we note that $A_{n,0}^*$ is the set of all arrangements.
Let $B_{n,\ell}=|A_{n,\ell}^*|$.  
We claim that $B_{n,\ell}=A_{n,\ell}$. \bigskip

The initial values with $\ell=0$ are immediate. We show  that $B_{n,\ell}$ satisfies recurrence \eqref{E:recA}. \medskip

Consider $A_{n,\ell-1}^*$. 
Let $A\in A_{n,\ell-1}^*$, so $A$ is an arrangement of $[n]$ containing 
$[\ell-1]$. If $\ell=1$, then $A\in A_{n,0}^*$ is any arrangement.
Now either $\ell\in A$ or $\ell\not\in A$.  \medskip

If $\ell\in A$, then $A\in A_{n,\ell}^*$, 
and so the number of arrangements in $A_{n,\ell-1}^*$ which contain  $\ell$ equals 
$B_{n,\ell}$.  \medskip

If $\ell\not\in A$, 
then by subtracting $1$ from all parts of $A$ which are $\ge \ell+1$ 
we obtain an arrangement of $[n-1]$ which contains $[\ell-1]$,
i.e., an arrangement in $A_{n-1,\ell-1}^*$. 
Conversely, given an arrangement in $A_{n-1,\ell-1}^*$, adding $1$ to all parts 
$\ge \ell$ yields an arrangement in $A_{n,\ell-1}^*$ which does not contain $\ell$. 
Hence the number of arrangements in $A_{n,\ell-1}^*$ which do not contain $\ell$ equals $B_{n-1,\ell-1}$. \medskip

We conclude that $B_{n,\ell-1}=B_{n,\ell}+B_{n-1,\ell-1}$, hence the result.
\end{proof}
\end{subsubsection}\bigskip

\begin{example}
When $n=3$ we have $a_3=A_{3,0}=16$ corresponding to the $16$ arrangements of $[3]$: 
$[\,],1,2,3,12,21,13,31,23,32,123,132,213,231,312,321$. \medskip

 Then 
$A_{3,1}=11$ corresponding to the $11$ arrangements of $[3]$ which contain $\{1\}$: 
$1,12,21,13,31,123,132,213,231,312,321$. \medskip

 Then 
$A_{3,2}=8$ corresponding to the $8$ arrangements of $[3]$ which contain $\{1,2\}$: 
$12,21,123,132,213,231,312,321$. \medskip

Finally,  
$A_{3,3}=3!=6$ corresponding to the $6$ arrangements of $[3]$ which contain $\{1,2,3\}$: 
$123,132,213,231,312,321$. 
\end{example}

Rearranging the factors in equation \eqref{E:Pst1}, we have
$$
P_{n,\ell}(s,t)=\sum_{j=\ell}^nA(j,\ell)\,A(n-\ell,j-\ell)\,s^{n-j}t^j\ . 
$$
With $s=t=1$ this gives:
\begin{equation}\label{eq:anl}
A_{n,\ell}=\sum_{j=\ell}^nA(j,\ell)\,A(n-\ell,j-\ell)\ .
\end{equation}

Here is a combinatorial explanation:\medskip

For any $j\ge \ell$, to obtain a $j$-arrangement $A$ of $[n]$ containing $[\ell]$ 
we may place the $\ell$ points of $\{1,2,\ldots,\ell\}$ into these $j$ positions in 
$A(j,\ell)$ ways. 
Then the remaining $(j-\ell)$ positions in $A$ can be filled in by a $(j-\ell)$-arrangement of the unused 
$(n-\ell)$ points in $A(n-\ell,j-\ell)$ ways. Hence \eqref{eq:anl}.\bigskip

\begin{example}
\begin{align*}
A_{5,2}&=\sum_{j=2}^5A(j,2)\,A(3,j-2)\\&=
A(2,2)\,A(3,0)+A(3,2)\,A(3,1)+A(4,2)\,A(3,2)+A(5,2)\,A(3,3)\\
&=2\times1+6\times3+12\times6+20\times6\\&=2+18+72+120=212\ .
\end{align*}
\end{example}

Finally, from \#2 of Theorem \ref{thm:Pst=}, $s=1$ and $t=1$ we have:
\begin{equation*}\label{E:Aasasum}
A_{n,\ell}=\sum_{j=0}^{\ell}(-1)^j\binom{\ell}{j}a_{n-j}\ .
\end{equation*}

\begin{example}
\begin{align*}
A_{5,2}&=\sum_{j=0}^{2}(-1)^j\binom{2}{j}a_{5-j}
=\binom{2}{0}a_5-\binom{2}{1}a_4+\binom{2}{2}a_3\\
&=1\times 326-2\times 65+1\times 16=326-130+16=212\ .
\end{align*}
\end{example}

\begin{subsubsection}{Permanents from $I+J$}
Theorem \ref{thm:xnl} specializes to
$$(I+J)^{\vee\ell}=\sum_{k=0}^{\min(\ell,n-\ell)} A_{\ell, k}\, \JS{k}{n\ell}\ .$$
This can be written using the hypergeometric form:
$$(I+J)^{\vee\ell}=\sum_{k=0}^{\min(\ell,n-\ell)} \, 
k!\,{}_2 F_0\left(\genfrac{}{}{0pt}{}{ k-\ell ,1+k}{\text{---}}\biggm| -1 \right)\,\JS{k}{n\ell}\ .$$
with spectrum 
\begin{align*}
&\text{eigenvalue } {}_2 F_0\left(\genfrac{}{}{0pt}{}{ \alpha-\ell ,-\alpha+n-\ell+1}{\text{---}}\biggm| -1 \right) \\
&\text{ occurring with multiplicity } \binom{n}{\alpha}-\binom{n}{\alpha-1}
\end{align*}
by Corollary \ref{cor:spec} and Proposition \ref{prop:spec}. \bigskip

\begin{example}
For $n=5$, $\ell=3$, we have
$$(I+J)^{\vee 3}= 
  \left[ \begin {array}{cccccccccc} 16&11&11&11&11&8&11&11&8&8
\\\noalign{\medskip}11&16&11&11&8&11&11&8&11&8\\\noalign{\medskip}11&
11&16&8&11&11&8&11&11&8\\\noalign{\medskip}11&11&8&16&11&11&11&8&8&11
\\\noalign{\medskip}11&8&11&11&16&11&8&11&8&11\\\noalign{\medskip}8&11
&11&11&11&16&8&8&11&11\\\noalign{\medskip}11&11&8&11&8&8&16&11&11&11
\\\noalign{\medskip}11&8&11&8&11&8&11&16&11&11\\\noalign{\medskip}8&11
&11&8&8&11&11&11&16&11\\\noalign{\medskip}8&8&8&11&11&11&11&11&11&16
\end {array} \right]$$
with characteristic polynomial
$$\left( \lambda-106 \right)  \left( \lambda-11 \right) ^{4} \left( 
\lambda-2 \right) ^{5}\ .$$
\end{example}

As for the case of derangements, the Johnson basis can be read off directly from the matrix $(I+J)^{\vee\ell}$.

\end{subsubsection}

\hfill \bigskip

\textbf{Acknowledgment.} We would like to thank Stacey Staples for discussions about zeons and trace formulas. 

\vfill \eject
\section{Appendix}

\subsection*{Generalized derangement numbers and integer sequences}
The first two columns of the $D_{n,\ell}$ triangle, 
$D_{n,0}$ and $D_{n,1}$, 
give sequences A000166 and A000255 in the 
On-Line Encyclopedia of Integer Sequences ~\cite{OEIS}. 
The comments for A000255 do not contain our combinatorial interpretation. \bigskip

The first two columns of the $A_{n,\ell}$ triangle, 
$A_{n,0}$ and $A_{n,1}$, 
give sequences A000522 and A001339. 
The comments contain our combinatorial interpretation. 
The next two columns, 
$A_{n,2}$ and $A_{n,3}$, 
gives sequences A001340 and A00134; 
here our combinatorial interpretation is not mentioned in the comments. 
\vfill\eject

 Generalized Derangement Triangles\\[.1in]
\thispagestyle{empty}
$\ell=0$ is the leftmost column. \\
The rows correspond to $n$ from $0$ to $9$. \bigskip


Values of $D_{n,\ell}$. \bigskip

$$
 \left[ \begin {array}{cccccccccc} 1&0&0&0&0&0&0&0&0&0
\\\noalign{\medskip}0&1&0&0&0&0&0&0&0&0\\\noalign{\medskip}1&1&2&0&0&0
&0&0&0&0\\\noalign{\medskip}2&3&4&6&0&0&0&0&0&0\\\noalign{\medskip}9&
11&14&18&24&0&0&0&0&0\\\noalign{\medskip}44&53&64&78&96&120&0&0&0&0
\\\noalign{\medskip}265&309&362&426&504&600&720&0&0&0
\\\noalign{\medskip}1854&2119&2428&2790&3216&3720&4320&5040&0&0
\\\noalign{\medskip}14833&16687&18806&21234&24024&27240&30960&35280&
40320&0\\\noalign{\medskip}133496&148329&165016&183822&205056&229080&
256320&287280&322560&362880\end {array} \right] 
$$ \\[.1in]

Values of $A_{n,\ell}$.\bigskip
$$ 
 \left[ \begin {array}{cccccccccc} 1&0&0&0&0&0&0&0&0&0
\\\noalign{\medskip}2&1&0&0&0&0&0&0&0&0\\\noalign{\medskip}5&3&2&0&0&0
&0&0&0&0\\\noalign{\medskip}16&11&8&6&0&0&0&0&0&0\\\noalign{\medskip}
65&49&38&30&24&0&0&0&0&0\\\noalign{\medskip}326&261&212&174&144&120&0&0
&0&0\\\noalign{\medskip}1957&1631&1370&1158&984&840&720&0&0&0
\\\noalign{\medskip}13700&11743&10112&8742&7584&6600&5760&5040&0&0
\\\noalign{\medskip}109601&95901&84158&74046&65304&57720&51120&45360&
40320&0\\\noalign{\medskip}986410&876809&780908&696750&622704&557400&
499680&448560&403200&362880\end {array} \right] 
$$
\vfill \eject
Exponential polynomials $h_{n,m}(s,t)$.  Note that, as is common for matrix indexing, we have dropped the commas in the numerical subscripts.\bigskip

$n=0$\par
$$h_{00}=1 \,,\qquad h_{01}=t \,,\qquad h_{02}=2t^2 \,,\qquad h_{03}=6t^3 \,,\qquad h_{04}=24t^4$$
\hfill\medskip

$n=1$ \par
$$h_{10}=s+t \,,\quad h_{11}=st+2t^2\,,\quad h_{12}=2st^2+6t^3 \,,\quad h_{13}=6st^3+24t^4 \,,\quad h_{14}=24st^4+120t^5$$
\hfill\medskip

$n=2$\par
\begin{align*}
h_{20}&=s^2+2st+2t^2 \,,\qquad h_{21}=s^2t+4st^2+6t^3 \,,\qquad h_{22}=2s^2t^2+12st^3+24t^4 \\ 
h_{23}&=6s^2t^3+48st^4+120t^5 \,,\qquad h_{24}=24s^2t^4+240st^5+720t^6
\end{align*}
\hfill\medskip

$n=3$\par
\begin{align*}
h_{30}&=s^3+3s^2t+6st^2+6t^3 \\  h_{31}&=s^3t+6s^2t^2+18st^3+24t^4 \,,\quad h_{32}=2s^3t^2+18s^2t^3+72st^4+120t^5 \\
h_{33}&=6s^3t^3+72s^2t^4+360st^5+720t^6 \,,\qquad h_{34}=24s^3t^4+360s^2t^5+2160st^6+5040t^7
\end{align*}
\hfill\medskip

$n=4$\par
\begin{align*}
h_{40}&=s^4+4s^3t+12s^2t^2+24st^3+24t^4 \,, \qquad h_{41}=s^4t+8s^3t^2+36s^2t^3+96st^4+120t^5\\
h_{42}&=2s^4t^2+24s^3t^3+144s^2t^4+480st^5+720t^6 \\ 
h_{43}&=6s^4t^3+96s^3t^4+720s^2t^5+2880st^6+5040t^7\\ 
h_{44}&=24s^4t^4+480s^3t^5+4320s^2t^6+20160st^7+40320t^8
\end{align*}
\vfill\eject
{
\vglue.525in
\setlength{\unitlength}{0.0575cm}
\begin{center}
\begin{picture}(180,50)(0,-30)
\put(-15,34){{
\small$
\begin{bmatrix}
s+t&t&t\\
t&s+t&t\\
t&t&s+t\\
\end{bmatrix}$}}

\put(75,43){\circle*{3}}
\put(66,25){\circle*{3}}
\put(84,25){\circle*{3}}
\put(66,25){\line(1,0){18}}
\put(66,25){\line(1,2){9}}
\put(75,43){\line(1,-2){9}}
\put(68.5,46){$s+t$}
\put(82,33){$t$}
\put(82,19){$s+t$}
\put(74,20){$t$}
\put(56,19){$s+t$}
\put(66.5,33){$t$}

\put(129,34){$P_{3,0}=s^3+3s^2t+6st^2+6t^3$}

\put(-40,45){$\ell=0$}


\put(10,5){\circle*{3}}
\put(1,-13){\circle*{3}}
\put(19,-13){\circle*{3}}
\put(1,-23){$(s+t)^3$}
%
\put(45,5){\circle*{3}}
\put(36,-13){\circle*{3}}
\put(54,-13){\circle*{3}}
\put(45,5){\line(1,-2){9}}
\put(35,-23){$(s+t)t^2$}
%
\put(80,5){\circle*{3}}
\put(71,-13){\circle*{3}}
\put(89,-13){\circle*{3}}
\put(71,-13){\line(1,0){18}}
\put(69,-23){$(s+t)t^2$}
%
\put(115,5){\circle*{3}}
\put(106,-13){\circle*{3}}
\put(124,-13){\circle*{3}}
\put(106,-13){\line(1,2){9}}
\put(105,-23){$(s+t)t^2$}
%
\put(150,5){\circle*{3}}
\put(141,-13){\circle*{3}}
\put(159,-13){\circle*{3}}
\put(141,-13){\line(1,0){18}}
\put(141,-13){\line(1,2){9}}
\put(150,5){\line(1,-2){9}}
\put(149,-23){$t^3$}

\put(-10,-56){{
\small$
\begin{bmatrix}
s+t&t&t\\
t&s+t&t\\
t&t&t\\
\end{bmatrix}$}}

\put(75,-47){\circle*{3}}
\put(66,-65){\circle{3}}
\put(84,-65){\circle*{3}}
\put(67.5,-65){\line(1,0){18}}
\put(67,-64){\line(1,2){9}}
\put(75,-47){\line(1,-2){9}}
\put(68.5,-44){$s+t$}
\put(82,-57){$t$}
\put(82,-71){$s+t$}
\put(74,-70){$t$}
\put(62,-71){$t$}%
\put(66.5,-57){$t$}

\put(134,-56){$P_{3,1}=s^2t+4st^2+6t^3$}

\put(-40,-45){$\ell=1$}


\put(10,-85){\circle*{3}}
\put(1,-103){\circle{3}}
\put(19,-103){\circle*{3}}
\put(-1,-113){$(s+t)^2t$}
%
\put(45,-85){\circle*{3}}
\put(36,-103){\circle{3}}
\put(54,-103){\circle*{3}}
\put(45,-85){\line(1,-2){9}}
\put(44,-113){$t^3$}
%
\put(80,-85){\circle*{3}}
\put(71,-103){\circle{3}}
\put(89,-103){\circle*{3}}
\put(72.5,-103){\line(1,0){18}}
\put(69,-113){$(s+t)t^2$}
%
\put(115,-85){\circle*{3}}
\put(106,-103){\circle{3}}
\put(124,-103){\circle*{3}}
\put(107,-102){\line(1,2){9}}
\put(105,-113){$(s+t)t^2$}
%
\put(150,-85){\circle*{3}}
\put(141,-103){\circle{3}}
\put(159,-103){\circle*{3}}
\put(142.5,-103){\line(1,0){18}}
\put(142,-102){\line(1,2){9}}
\put(150,-85){\line(1,-2){9}}
\put(149,-113){$t^3$}
\put(-5,-146){{
\small$
\begin{bmatrix}
s+t&t&t\\
t&t&t\\
t&t&t\\
\end{bmatrix}$}}

\put(75.2,-137){\circle*{3}}
\put(66,-155){\circle{3}}
\put(84,-155){\circle{3}}
\put(67.5,-155){\line(1,0){15}}
\put(67,-154){\line(1,2){9}}
\put(75,-137){\line(1,-2){8.2}}
\put(68.5,-134){$s+t$}
\put(82,-147){$t$}
\put(82,-161){$t$}
\put(74,-160){$t$}
\put(62,-161){$t$}%
\put(66.5,-147){$t$}

\put(139,-146){$P_{3,2}=2st^2+6t^3$}

\put(-40,-135){$\ell=2$}


\put(10,-175){\circle*{3}}
\put(1,-193){\circle{3}}
\put(19,-193){\circle{3}}
\put(-1,-203){$(s+t)t^2$}
%
\put(45,-175){\circle*{3}}
\put(36,-193){\circle{3}}
\put(54,-193){\circle{3}}
\put(45,-175){\line(1,-2){8.2}}
\put(44,-203){$t^3$}
%
\put(80,-175){\circle*{3}}
\put(71,-193){\circle{3}}
\put(89,-193){\circle{3}}
\put(72.2,-193){\line(1,0){15.5}}
\put(69,-203){$(s+t)t^2$}
%
\put(115,-175){\circle*{3}}
\put(106,-193){\circle{3}}
\put(124,-193){\circle{3}}
\put(107,-192){\line(1,2){9}}
\put(114,-203){$t^3$}
%
\put(150,-175){\circle*{3}}
\put(141,-193){\circle{3}}
\put(159,-193){\circle{3}}
\put(142.5,-193){\line(1,0){15.0}}
\put(142,-192){\line(1,2){9}}
\put(150,-175){\line(1,-2){8.2}}
\put(149,-203){$t^3$}

\put(0,-236){{
\small$
\begin{bmatrix}
t&t&t\\
t&t&t\\
t&t&t\\
\end{bmatrix}$}}

\put(75,-227){\circle{3}}
\put(66,-245){\circle{3}}
\put(84,-245){\circle{3}}
\put(67.2,-245){\line(1,0){15.5}}
\put(67,-244){\line(1,2){7.8}}
\put(75.5,-228.5){\line(1,-2){7.8}}
\put(74,-224){$t$}
\put(82,-237){$t$}
\put(82,-251){$t$}
\put(74,-250){$t$}
\put(62,-251){$t$}%
\put(66.5,-237){$t$}

\put(144,-236){$P_{3,3}=6t^3$}

\put(-40,-225){$\ell=3$}


\put(10,-265){\circle{3}}
\put(1,-283){\circle{3}}
\put(19,-283){\circle{3}}
\put(8,-293){$t^3$}
%
\put(44.5,-265){\circle{3}}
\put(36,-283){\circle{3}}
\put(53,-283){\circle{3}}
\put(45,-266.6){\line(1,-2){7.6}}
\put(44,-293){$t^3$}
%
\put(80,-265){\circle{3}}
\put(71,-283){\circle{3}}
\put(89,-283){\circle{3}}
\put(72.2,-283){\line(1,0){15.6}}
\put(79,-293){$t^3$}
%
\put(115,-265){\circle{3}}
\put(106,-283){\circle{3}}
\put(124,-283){\circle{3}}
\put(107,-282){\line(1,2){7.7}}
\put(114,-293){$t^3$}
%
\put(150,-265){\circle{3}}
\put(141,-283){\circle{3}}
\put(159,-283){\circle{3}}
\put(142.3,-283){\line(1,0){15.5}}
\put(142,-282){\line(1,2){7.6}}
\put(150,-266.5){\line(1,-2){7.8}}
\put(149,-293){$t^3$}

\put(-40,-310){FIGURE 2. $M_{3,\ell}, K_3^{(\ell)}, P_{3,\ell}$ and the 5 weighted elementary subgraphs
of $K_3^{(\ell)}$ for $\ell=0,1,2$, $3$.}
\put(30,-320){Distinguished vertices are shown {\bf bold}.}
\end{picture}
\end{center}
}
\vfill\eject

\end{document}